\documentclass[12pt]{amsart}

\usepackage{xcolor}

\usepackage{hyperref}
\usepackage{bbm}

%%%%%%%% de Alberto %%%%%%%%%%%
\usepackage{graphicx}
\usepackage{eucal,amsmath,amssymb}
\usepackage{amsthm}
\usepackage{ulem}
%%%%%%%%%%%%%%%%%%%%%%%%%%%%%

\usepackage{amsmath}%
\usepackage{amsfonts}%
\usepackage{amssymb}%
\usepackage{graphicx}

\usepackage{t1enc,epsfig}

%\usepackage{t1enc,epsfig,multicol}
%------------------------------------------------------------
% Theorem like environments
%
\newtheorem{theorem}{Theorem}
\theoremstyle{plain}

\newtheorem{definition}{Definition}

\newtheorem{proposition}{Proposition}
\newtheorem{remark}{Remark}

\numberwithin{equation}{section}

\newcommand{\srs}{\mathcal{S}_{(r,s)}}

\def\N{\mathbb{N}}   %Naturales
   %Enteros
\def\R{\mathbb{R}}   %Reales
\def\C{\mathbb{C}}   %Complejos
\def\T{\mathbb{T}}   %Toro

\newcommand{\kn}{k=1, \dots, n}

%%%%%%%%%%%%%%%%%%%%%%%%%%%%%%%%
%   Cosas de Alberto %%%%%%%%%%%

%\newenvaironment{dfn}{Definition}
%\newtheorem{corol}{Corollary}
%\newtheorem{remark}{Remark}

\newcommand{\norm}[1]{\|#1\|}

\newcommand{\vb}{\mathcal V}
\newcommand{\wb}{\mathcal W}
\newcommand{\vu}{\underline v}
\newcommand{\wu}{\underline w}

\newcommand{\pb}{\mathcal P}
\newcommand{\mnh}{M^h_n(\C)}

\newcommand{\frs}{\mathcal{F}_{(r,s)}}

\DeclareMathOperator{\tr}{Tr}

\DeclareMathOperator{\gr}{Gr}
\DeclareMathOperator{\rank}{rank}

\DeclareMathOperator{\grad}{grad}
\DeclareMathOperator{\im}{Im}
%\DeclareMathOperator{\dim}{dim}

%
% Hasta ac· es de Alberto%%%%%%%%%%
%%%%%%%%%%%%%%%%%%%%%%%%%%%%%%%%%%%

%

% %%%%%%%%%%%%%%%
 \topmargin -0.25in
 \textheight 23cm
 \oddsidemargin 0cm
 \evensidemargin 0cm
 \textwidth17cm
% %%%%%%%%%%%

%--------------------------------------------------------
\begin{document}
%\begin{frontmatter}
%	
%	 
%
%	\author[1]{Alberto Mendoza}
%	\ead{jacob@usb.ve}
%	
%	
%	\author[1,2]{L\'azaro Recht}
%	\ead{recht@usb.ve}%
%	
%	\author[2,3]{Alejandro Varela\corref{cor1}}
%	\ead{avarela@ungs.edu.ar}
%	
%	\cortext[cor1]{Corresponding author}
%	
%	\address[1]{Universidad Sim\'{o}n Bol\'{\i}var, Apartado 89000, Caracas 1080A, Venezuela\fnref{label1}}
%	
%	\address[2]{Instituto Argentino de Matem\'atica (IAM) ``Alberto P. Calder\'on'', CONICET, Saavedra 15, 3er piso (C1083ACA), Buenos Aires, Argentina.}
% 
%	\address[3]{Instituto de Ciencias, Universidad Nacional de General Sarmiento, J.\ M.\ Guti\'errez 1150, (1613) Los Polvorines, Argentina}
	
	\title[Supports for Minimal Hermitian Matrices]{Supports for Minimal Hermitian Matrices}

	\author[A. Mendoza]{Alberto Mendoza}
	\address{Universidad Sim\'{o}n Bol\'{\i}var\\ Apartado 89000\\ Caracas 1080A\\ Venezuela
	}
	\email{jacob@usb.ve}

	\author[L. Recht]{L\'azaro Recht}
	\address{Universidad Sim\'{o}n Bol\'{\i}var\\ Apartado 89000\\ Caracas 1080A\\ Venezuela	
		\\and\\ Instituto Argentino
		de Matem\'atica ``Alberto P. Calder\'on''\\ Saavedra 15 3$^\text{er}$ piso\\
		(C1083ACA) Ciudad Aut\'onoma de Buenos Aires, Argentina
	}
	\email{recht@usb.ve}

	\author[A. Varela]{Alejandro Varela}
	\address{Instituto de Ciencias, Universidad Nacional de General Sarmiento\\
		J.M. Gutierrez 1150\\ 
		(B1613GSX) Los Polvorines, Pcia. de Buenos Aires, Argentina
		\\and\\ Instituto Argentino
		de Matem\'atica ``Alberto P. Calder\'on'', Saavedra 15 3$^\text{er}$ piso,
		(C1083ACA) Ciudad Aut\'onoma de Buenos Aires, Argentina} 
		\email{avarela@ungs.edu.ar}

	\subjclass[2010]{15A12, 15B57, 14M15}
	\keywords{minimal hermitian matrix, diagonal matrices, flag manifolds, geometry}	
	
	\begin{abstract}
		We study certain pairs of subspaces $V$ and $W$ of $\C^n$ we call supports that consist of eigenspaces of the eigenvalues $\pm\|M\|$ of a minimal hermitian matrix $M$ ($\|M\|\leq \|M+D\|$ for all real diagonals $D$).
		
		For any pair of orthogonal subspaces we define a non negative invariant $\delta$ called the adequacy to measure how close they are to form a support and to detect one. This function $\delta$ is the minimum of another map $F$ defined in a product of spheres of hermitian matrices. We study the gradient, Hessian and critical points of $F$ in order to approximate $\delta$. These results allow us to prove that the set of supports has interior points in the space of flag manifolds. 
		%Moreover we consider a geometric interpretation of the adequacy using different parameter spaces allowing a characterization of some critical points of $\delta$.
		\end{abstract}
	
	\date{\today}
	\maketitle

%	\begin{keyword} minimal hermitian matrix \sep  diagonal matrices \sep  flag manifolds \sep geometry
%		\MSC[2010] 		15A12 \sep 15B57 \sep 14M15
%	\end{keyword}
%	
%\end{frontmatter}

\section{Introduction}
Let $\C^n$ denote as usual the vector space of $n$-tuples of complex numbers 
and $M_n(\C)$ the $n\times n$ complex matrices. 
Let $M^h_n(\C)$ (respectively $M^{ah}_n(\C)$) be the set of hermitian or self-adjoint (respectively anti-hermitian) matrices and $\|\ \|$ the spectral norm in $M_n(\C)$.  

We call $Z\in\mnh$ a minimal 
matrix if 
\begin{equation}
\label{eq: def matriz minimal}
\|Z\|\leq \|Z+D\|\ , \text{ for every real diagonal  } \ D,
\end{equation}
(a similar definition can be given for antihermitian matrices and pure 
imaginary  
 diagonals). 
Minimal 
matrices allow the description of short length curves in the homogeneous 
space $\mathcal{P}= U(n )/ U( \mathcal{D}_n)$, where
$\mathcal{D}_n$ denotes the diagonal $n\times n$ complex matrices, 
$U(\mathcal{D}_n)$ the unitary diagonal matrices and $U(n)$ the unitary group of $ M_n(\C)$. 
%This homogeneous spaces can be naturally identified with unitary orbits 
%of hermitian 
%matrices, for example $ {U}(M_n(\C))/ {U}( \mathcal{D}_n)\simeq 
%\{U D_0  U^*: \ U\in U(n) \}$ for $D_0\in \mathcal{D}_n$ with distinct real 
%entries 
%on the diagonal.% with $(D_0)_{i,i}\neq (D_0)_{j,j}$ for $i\neq j$. 
%
%if $Z\in M_n^h(\C)$ is a minimal matrix, the curves $\gamma: 
%[-\frac{\pi}{2\|Z\|}, \frac{\pi}{2\|Z\|}]\to \mathcal{P}$ defined by
%$$
%\gamma(t)=e^{itZ} \rho e^{-itZ}
%$$
%have minimal length on $\mathcal{P}$, when we consider the Finsler metric 
%$\displaystyle{\text{dist}(\rho_1,\rho_2)=\inf_{\begin{smallmatrix}\alpha:[a,b]\to
%\mathcal{P} \\ 
%\alpha(a)=\rho_1, 
%\alpha(b)=\rho_2\end{smallmatrix}} \text{Length}(\alpha)  }$  in $\mathcal{P}$.
More precisely, we consider the homogeneous space $\mathcal{P}$, with the 
left action $L_U(\rho)=U\rho U^*$, for $U\in U( n)$, $\rho\in\mathcal{P}$ (where the action is performed on any element of the class $\rho$). Then the space $\mathcal{P}$ is provided with the invariant Finsler metric defined by the quotient norm in $M_n(\C)^{ah}/\mathcal{D}_n^{ah}$, the tangent space of $\mathcal{P}$ at $\rho$. This structure allows the definition of a natural distance $\text{d}(\rho_1,\rho_2)$ in $\mathcal{P}$ as the infimum of the length of curves in $\mathcal{P}$ joining $\rho_1$ and $\rho_2$ (see \cite{bottazzi_varela_LAA, dmr1} for details).

The following result is a restatement of Theorem I of \cite{dmr1} in the 
present context.

\begin{theorem}\label{teo: Theorem I de dmr1} 
	Let $\rho\in\mathcal{P}=U(n )/ U( \mathcal{D}_n)$, $X\in(T\mathcal{P})_{\rho}\simeq M_n(\C)^{ah} 
	/\mathcal{D}_n^{ah}$ and $Z\in 
	M_n^{ah}(\C)$ a minimal matrix which projects to $X$ ($X=Z\rho-\rho Z$).
	Then the curve given by $\gamma(t)=L_{e^{tZ}}\rho={e^{tZ}}\rho {e^{-tZ}}$ satisfies 
	$\gamma(0)=\rho$, $\dot \gamma(0)=X$ and
	has minimal length among all curves in $\mathcal{P}$ joining 
	$\gamma(0)$ to $\gamma(t)$ for each $t$ with $|t|\leq\frac{\pi}{2||Z||}$.
\end{theorem}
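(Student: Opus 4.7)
The plan is to verify the initial conditions, compute the length of $\gamma|_{[0,t]}$ via invariance, and then argue a lower bound for arbitrary competing curves.

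First, $\gamma(0)=\rho$ is immediate from $\gamma(t)=e^{tZ}\rho e^{-tZ}$, and differentiating at $t=0$ gives $\dot\gamma(0)=Z\rho-\rho Z$, which represents $X$ under the identification $(T\mathcal{P})_\rho\simeq M^{ah}_n(\C)/\mathcal{D}^{ah}_n$ coming from the orbit map. Because the Finsler metric is $L_U$-invariant, the velocity $\dot\gamma(s)$, obtained by transporting $\dot\gamma(0)$ with the differential of $L_{e^{sZ}}$, has constant norm equal to $\|\dot\gamma(0)\|_{\rho}$. By the definition of the quotient norm this is $\inf_{D\in\mathcal{D}^{ah}_n}\|Z+D\|$, and the minimality hypothesis \eqref{eq: def matriz minimal} on $Z$ forces this infimum to equal $\|Z\|$. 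Integrating, $L(\gamma|_{[0,t]})=|t|\,\|Z\|$.

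To finish, one must show that any piecewise smooth curve $\beta:[0,t]\to\mathcal{P}$ with $\beta(0)=\rho$ and $\beta(t)=\gamma(t)$ has length at least $|t|\,\|Z\|$. The natural strategy is to lift $\beta$ to $\tilde\beta:[0,t]\to U(n)$ with $\tilde\beta(0)=I$, so that $\tilde\beta(t)$ differs from $e^{tZ}$ only by an element of the isotropy subgroup of $\rho$. The length of $\beta$ then equals $\int_0^t\|\dot{\tilde\beta}(s)\tilde\beta(s)^{-1}\|_{\mathrm{quot}}\,ds$, and the goal is to bound this from below by the length $|t|\,\|Z\|$ of the one-parameter lift $s\mapsto e^{sZ}$.

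The main obstacle, which is the essential content of Theorem I of \cite{dmr1}, is the quantitative estimate that turns this integral inequality into the desired lower bound. The standard tool is a Porta--Recht type trigonometric inequality for the distance in $U(n)$ from $e^{tZ}$ to the isotropy subgroup of $\rho$; the appearance of a factor like $\sin(t\|Z\|)$ is what makes the bound $|t|\|Z\|\leq\pi/2$ sharp, since beyond this range conjugate points may appear and shorter curves emerge. As the present statement is explicitly a translation of that earlier theorem into the $U(n)/U(\mathcal{D}_n)$ setting, the proof reduces to checking that the Finsler quotient structure considered here matches the hypotheses of \cite{dmr1}; the minimality condition \eqref{eq: def matriz minimal} provides exactly the input required, and the conclusion is then imported verbatim.
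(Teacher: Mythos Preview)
The paper gives no proof of this theorem at all: it is presented purely as a restatement of Theorem~I of \cite{dmr1} in the specific setting $\mathcal{P}=U(n)/U(\mathcal{D}_n)$, and the reader is referred there for the argument. Your proposal is consistent with this---you verify the elementary initial conditions and the length computation via invariance and the minimality hypothesis, and then correctly identify that the substantive lower bound (and the origin of the threshold $\pi/(2\|Z\|)$) is precisely the content of \cite{dmr1}, to which you also defer. So your write-up is in the same spirit as the paper's treatment, only more explicit about which parts are routine and which are imported.
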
 

Observe that from all the cases covered by Theorem I of \cite{dmr1}, the homogeneous space $\mathcal{P}=U(n)/U( \mathcal{D}_n)$ we are considering here is probably the simplest non commutative non trivial case.
 
This result motivates the study of minimal matrices in the spectral norm. Some particular properties have been studied already but in the present work we focus on the particular and rich structure of a spectral pair of eigenspaces related to a minimal matrix.

If $Z\in M_n^h(\C)$  is a minimal matrix, then $\pm\|Z\|$ must be eigenvalues of $Z$.
Nevertheless, this condition is not enough. 
If $\pm\|Z\|\in \sigma(Z)$, then $Z$ is minimal if and only if there exist orthogonal corresponding eigenspaces $V_+$ and $V_-$ (ranges of the spectral projections $P_{V_+}$ and $P_{V_-}$ corresponding to the 
eigenvalues $\pm \|Z\|$) that satisfy
\begin{equation}\label{eq: estructura matriz minimal}
Z= \|Z\| P_{V_+}-\|Z\| P_{V_-}+R
\end{equation}
(where $R\in M_n^h(\C)$, its range is orthogonal to $V_+\oplus V_-$, and 
$\|R\|\leq\|Z\|$) and such that $V_+$ and $V_-$ satisfy the following property:

\textbf{Condition 1}. There exist orthonormal sets $\{v_i\}_{i=1}^{p}\subset V_+$ 
and  
$\{w_j\}_{j=q+1}^{p+q}\subset V_-$ such that
\begin{equation}\label{interseccion convexa no vacia BON particular}
\text{co}\big( \{v_i\circ\overline{v_i}\}_{i=1}^{r} \big) \cap
\text{co}\left(\{w_j\circ\overline{w_j}\}_{j=r+1}^{r+s}\right) \neq \emptyset
\end{equation}
where $\circ$ denotes the Hadamard or entrywise product and $\text{co}(A)$ the 
convex hull of $A$ (see Corollary 3 in 
\cite{andruchow_larotonda_recht_varela_Companneras}).

In Theorem \ref{teo: equival def soporte} it is proved that Condition 1 is equivalent to the 
following property held by two orthogonal subspaces $V$ and $W$.

\begin{definition}\label{def: soporte}
Given two orthogonal subspaces $V$ and $W\subset \C^n$ we call the pair $(V,W)$ a {\bf support} if there exist non trivial subsets $\{v^1,v^2,\dotsc,v^p\}$ of $V$ and $\{w^1,w^2,\dotsc,w^q\}$ of $W$ with coordinates in the canonical basis given by $v^i=(v^i_1,v^i_2,\dotsc,v^i_n)$, for $i=1,\dotsc,p$ and $w^j=(w^j_1,w^j_2,\dotsc,w^j_n)$, for $j=1,\dotsc,q$ such that 
\begin{equation}
\label{eq: condicion soporte} 
\left\{
\begin{array}{ccc}
|v_1^1|^2+|v_1^2|^2+\dotsb+|v_1^p|^2&=&|w_1^1|^2+|w_1^2|^2+\dotsb+|w_1^q|^2\\
|v_2^1|^2+|v_2^2|^2+\dotsb+|v_2^p|^2&=&|w_2^1|^2+|w_2^2|^2+\dotsb+|w_2^q|^2\\
\vdots&\vdots&\vdots\\
|v_n^1|^2+|v_n^2|^2+\dotsb+|v_n^p|^2&=&|w_n^1|^2+|w_n^2|^2+\dotsb+|w_n^q|^2
\end{array}
\right.
\end{equation}
or equivalently $\sum_{i=1}^p v^i\circ\  \overline{v^i}=\sum_{j=1}^q w^j\circ\ 
\overline{w^j}$, where $\circ$ denotes the Hadamard product and $\overline{x}$ 
the vector formed by the conjugated coordinates of $x$.

This definition can also be stated choosing orthogonal vectors $\{v^i\}_{i=1}^t$ and  $\{w^j\}_{j=1}^h$ (see Theorem \ref{teo: en def. soporte se puede tomar l.i. y ortog.}).

\end{definition}

\begin{remark}
		The previous discussion implies that $Z$ is a minimal matrix with a decomposition as in \eqref{eq: estructura matriz minimal} if and only if the pair of subspaces $(V_+,V_-)$ is a support (see also Theorem \ref{teo: equival def soporte}).
	\end{remark}

 We will denote with $\mathcal{S}_{(r,s)}$ the set of supports of $\C^n$ with corresponding dimensions $r$ and $s$:
	\begin{equation}
	\label{notacion conjunto soportes}
	\begin{split}
	\mathcal{S}_{(r,s)}=&\left\{(V,W)\in \C^n\times \C^n: (V,W) \text{ is a support}\right.\\
	& \qquad\ \ \left. \text{ with } \dim(V)=r \text{ and } \dim(W)=s\right\}
	\end{split}
	\end{equation}
 
	\begin{remark}\label{rem: variedad algebraica}
		The definition of the set of supports suggests that it might have the structure of a real algebraic set. As expected, $\srs$ turns out to be closed (see Proposition \ref{prop: Srs es cerrado en las banderas}). Nevertheless, the fact that for every $n\in\N_{n\geq 3}$ there exist interior points in $\srs$ in the ambient of a flag manifold of $\C^n$ is a surprising result (see Section \ref{sec: soportes con entornos de soportes}). It would be interesting to find out if $\srs$ is a semi-algebraic set.
	\end{remark}
 
The previous comments allow us to state the following result.

\begin{remark}
	\label{teo: funcion entre minimales y soportes}
	There exists a function between the set of minimal matrices $Z$ with eigenspaces $V_+$ and $V_-$ corresponding to the eigenvalues $\pm \|Z\|$ with $\dim(V_+)=r$, $\dim(V_-)=s$ onto $\srs$ that maps $Z$ to the support $(V_+,V_-)$. 

Consider the equivalence class of a matrix $M\in M_n^h(\C)$ defined by $[M]=\{ N\in M_n^h(\C): (M-N) \in \mathcal{D}_n\}$. The relation between $[M]$ and the support determined by its corresponding minimal matrix (or matrices) is a work in progress that will studied elsewhere. 
	\end{remark}

Supports are a fundamental aspect of the description of minimal matrices.
In this work we are going to analyze the structure of the set of supports $\srs$ as a subset of the flag manifold $\mathcal{F}_{(r,s)}$ (see \eqref{def: F(r,s)}) under the identification of $(V,W)\in\srs$ with $V\oplus W\oplus  (\C^n\ominus V\ominus W)$.
The authors consider that the study of $\srs\subset \mathcal{F}_{(r,s)}$ is interesting by itself.

In order to measure how far are two subspaces $V$ and $W$ to become a support we define in \eqref{def: definicion de adequacy} a number $\delta(V,W)\geq 0$ we call the adequacy of $V$ and $W$ that satisfies $\delta(V,W)=0$ if and only if $(V,W)$ is a support. The adequacy is a natural tool to achieve this and can be computed as the minimum of a function $F$ defined on the product of certain spheres $S_V\times S_W$ of linear maps (see \eqref{def: S V y S W} and \eqref{def: funcion F}). We study the gradient, Hessian and critical points of this $F$ (see Section \ref{sec: adecuacion}) to allow the approximation of the adequacy. Some of the formulas obtained are used in the appendices to obtain numerical examples of particular supports that are interior points of flag manifolds in low dimensions. These results are used to prove in Theorem \ref{teo: los soportes forman un abierto} that there exist open neighborhoods of supports (formed by supports in $\C^n$ for every $n\in\N_{n\geq 3}$) in the flag manifold $\mathcal{F}_{(\dim(V),\dim(W))}$.

 We also consider a geometric interpretation of the adequacy in Section \ref{sec: interpret geom de adecuacion} describing a new space of parameters to calculate it. This perspective allows the characterization of some critical points of the map $F$ whose global minimum is the adequacy in sections \ref{sec: On the critical points of the function F} and \ref{sec: Characterization of critical points of F}. 

\section{Preliminaries and notation}\label{sec: preliminares}
Here we introduce some notation used throughout the article.
$M_n(\C)$ will denote the $n\times n$ matrices with coefficients in $\C$, 
$M_n^h(\C)$ the hermitian matrices and $M_n^{ah}(\C)$ the anti-hermitian 
matrices.
The expression diag$(a_1,a_2,\dots,a_n)$ denotes the diagonal matrix in 
$M_n(\C)$ with the elements $a_1,a_2,\dots,a_n\in \C$ in its principal diagonal, and $\Phi:M_n(\C)\to \mathcal{D}_n\subset M_n(\C)$ the conditional expectation such that $\Phi(x)$ is the diagonal matrix formed with the diagonal entries of $x$.

As usual $GL(n,\C)$ denotes the general group of invertible matrices in 
$C^{n\times n}$. And $\gr(k,\C^n)$ will denote the Grassmannian manifold of all 
$k$-dimensional subspaces of $\C^n$.

%\textcolor{red}{????????????????????}

We denote with $v\circ w$ the Hadamard (or Schur) entrywise product of two 
vectors 
$v$, 
$w\in\C^n$, where $v\circ w\in\C^n$, and $(v\circ w)_i=v_i w_i$, for $i=1,\dots 
n$. Similarly $A\circ B$ is the Hadamard (or Schur) product of two matrices $A, B\in M_n(\C)$.

We use $\mathcal{F}_{(r,s)}$ to represent the set
\begin{equation}
\label{def: F(r,s)}
\begin{split}
\mathcal{F}_{(r,s)}=\{(V,W):  V\perp W & \text{ are subspaces of }\C^n,\\
& \dim(V)=r, \dim(W)=s\}.
\end{split}
\end{equation}
Observe that the pair $(V,W)\in\mathcal{F}_{(r,s)}$ can be identified with the element $\{0\}\subset V\subset V\oplus W\subset \C^n$ in a classic flag manifold $F(r,r+s,n)$ which is isomorphic to the homogeneous space $U(n)/\big(U(r)\times U(s)\times U(n-r-s)\big)$. Therefore, $\mathcal{F}_{(r,s)}$ can be identified with the flag manifold $F(r,r+s,n)$.

\section{Properties of a support $(V,W)$}\label{seccion: The basis in the definition of a support can be 
taken orthogonal}

%\textcolor{red}{aca no hay teorema ni nada que explicite el resultado}

Given $v^1,v^2,\dotsc,v^p\in V$, for $V$ a subspace of $\C^n$, we denote with
 \begin{equation}\label{def: v barra}
 \underline{v}=(v^1,v^2,\dots, v^p)=
\left(
\begin{array}{cccc}
 v_1^1 & v_1^2 &\dots &v_1^p\\
 \vdots &\vdots&\dots &\\
v_n^1 & v_n^2 &\dots &v_n^p
\end{array}
\right)
\end{equation}
either the $n\times p$ matrix or the $\C^p\to \C^n$ linear map.
Let $\Phi:M_n(\C)\to \mathcal{D}_n$ be the map such that $\Phi(x)$ is the diagonal of $x$. Then the $n$-tuples that appear in \eqref{eq: condicion soporte}  can be written
\begin{equation}\label{eq: def diag vu.vu}
\sum_{i=1}^p v^i\circ \overline{v^i} = \left(
 |v_1^1|^2 + \dots +|v_1^p|^2, \  \dots \ ,
  |v_n^1|^2 + \dots +|v_n^p|^2\right)\simeq \Phi(\underline{v}\ \underline{v}^*)
\end{equation}
where we identified the vector with the diagonal matrix $\Phi(\underline{v}\ \underline{v}^*)$ of $\underline{v}\ \underline{v}^*\in M^h_n(\C)$. 

Then using the singular value decomposition of $\vu=u s x^*$, with $u\in U(n)$, $x\in U(p)$ and $s$ the $n\times p$ diagonal matrix of the singular values of $\vu$ in the $s_{j,j}$ entries. Let us denote them with $s_j$, $j=1,\dots, p$. 
Now consider the column vectors $u^i\in \C^n$, $i=1,\dots, n$, from the unitary matrix $u$. Note that these $u^i$ are eigenvectors of $\vu\ \vu^*$.

Let  $e_i$, for $i=1,\dots, n$, be the $i^{\text{th}}$ element of the canonical basis of $\C^n$. Then the $i^{\text{th}}$ diagonal elements of $\Phi(\vu\ \vu^*)$ are
\begin{equation}\label{eq: diagonal de vu igual a diagonal de autovectores por valores singulares}
\langle \vu\ \vu^* e_i, e_i\rangle=\langle x s^t u^* e_i, x s^t u^* e_i\rangle=\sum_{j=1}^p s_j^2|u^j_i|^2=\sum_{j=1}^p \langle s_j u^j_i,s_j u^j_i\rangle.
\end{equation}

Therefore if we consider the $n\times p$ matrix given by its columns
\begin{equation}
\label{eq: base ortogonal con igual momento que vectores generadores}
\widetilde{\underline{v}}=(s_{1} u^{1} , s_{2} u^{2},\dots ,s_{p} u^{p})
\end{equation}
then the computation made in (\ref{eq: diagonal de vu igual a diagonal de autovectores por valores singulares}) proves that 
 \begin{equation}\label{eq: momentos entre vectores y vectores ortogonales coincide}
 \Phi(\vu\ \vu^*)=\Phi(\widetilde{\underline{v}}\ \widetilde{\underline{v}}^*).
 \end{equation} 
Moreover, these columns generate the same subspace than the original $\{v^j\}_{j=1}^p$.

Let $K\subset \{1,\dots, p\}$ be the subset of indexes such that $s_k\neq 0$ if and only if $k\in K$ and let $t=\#(K)$. Then the vectors $\{s_k u^k\}_{k\in K}\subset \C^n$ are orthogonal to each other and generate the same subspace than the original columns $v^j$ of $\vu$ for $j=1,\dots, p$.

Therefore, if we consider the $n\times t$ matrix with columns $s_k u^k$, $k\in K$
\begin{equation}
\label{eq: base de vectores ortogonal con igual momento}\widetilde{\underline{v'}}=
(s_{k_1} u^{k_1} , s_{k_2} u^{k_2},\dots ,s_{k_t} u^{k_t})
%\left(
%\begin{array}{cccc}
%s_{k_1} u^{k_1} & s_{k_2} u^{k_2}&\dots &s_{k_t} u^{k_t}\\
%\vdots &\vdots&\dots&\vdots \\ 
%\downarrow &\downarrow&\dots &\downarrow
%\end{array}
%\right)
\end{equation}
then its columns form an orthogonal basis of the subspace generated by $\{v^j\}_{j=1}^r$ and it is apparent that also $\Phi\left(\widetilde{\underline{v'}}\ \widetilde{\underline{v'}}^*\right)=\Phi(\widetilde{\underline{v}}\ \widetilde{\underline{v}}^*)=\Phi(\vu\ \vu^*)$.

Therefore, we have proved the following result.

\begin{theorem}\label{teo: en def. soporte se puede tomar l.i. y ortog.}
If $(V,W)$ is a support in $\C^n$ as in Definition \ref{def: soporte} then there exists not null orthogonal vectors $\{v^i\}_{i=1}^t\subset V$ and $\{w^j\}_{j=1}^h\subset W$ that satisfy equation \eqref{eq: condicion soporte}, or equivalently $\sum_{i=1}^t v^i\circ\  \overline{v^i}=\sum_{j=1}^h w^j\circ\ 
\overline{w^j}$.
\end{theorem}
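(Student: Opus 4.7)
My plan is to follow the singular value decomposition approach that the preceding discussion has set up, applying it symmetrically to both families of vectors.

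First, I would start with the non-orthogonal vectors $\{v^i\}_{i=1}^p\subset V$ and $\{w^j\}_{j=1}^q\subset W$ furnished by Definition \ref{def: soporte} and form the $n\times p$ matrix $\vu=(v^1,\dots,v^p)$ and the $n\times q$ matrix $\wu=(w^1,\dots,w^q)$. The support condition \eqref{eq: condicion soporte} is then the matrix identity $\Phi(\vu\,\vu^*)=\Phi(\wu\,\wu^*)$. The task is to replace $\vu$ and $\wu$ by matrices whose columns are orthogonal, lie in $V$ and $W$ respectively, and preserve this diagonal.

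Next, I would apply the singular value decomposition $\vu=u\,s\,x^*$ with $u\in U(n)$, $x\in U(p)$ and $s$ the $n\times p$ diagonal matrix of singular values $s_1,\dots,s_p$. Let $K=\{k:s_k\neq 0\}$ and $t=\#(K)$, and form the $n\times t$ matrix $\widetilde{\underline{v'}}$ whose columns are $s_{k}u^{k}$ for $k\in K$. The crucial observation (already contained in \eqref{eq: diagonal de vu igual a diagonal de autovectores por valores singulares} and \eqref{eq: momentos entre vectores y vectores ortogonales coincide}) is that $\vu\,\vu^*=u\,s\,s^{t}\,u^{*}=\widetilde{\underline{v'}}\,\widetilde{\underline{v'}}^{*}$, so in particular their diagonals coincide. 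The columns of $\widetilde{\underline{v'}}$ are mutually orthogonal (distinct columns of $u$ are orthogonal) and nonzero (by the choice of $K$); moreover they all lie in $V$ because they span the range of $\vu$, which is contained in $V$. Applying the identical construction to $\wu$ produces orthogonal nonzero vectors $\{\tilde w^{j}\}_{j=1}^{h}\subset W$ with $\Phi(\wu\,\wu^{*})=\Phi(\widetilde{\underline{w'}}\,\widetilde{\underline{w'}}^{*})$.

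Combining the two, we get
\[
\sum_{i=1}^{t}\tilde v^{i}\circ\overline{\tilde v^{i}}\;=\;\Phi(\widetilde{\underline{v'}}\,\widetilde{\underline{v'}}^{*})\;=\;\Phi(\vu\,\vu^{*})\;=\;\Phi(\wu\,\wu^{*})\;=\;\Phi(\widetilde{\underline{w'}}\,\widetilde{\underline{w'}}^{*})\;=\;\sum_{j=1}^{h}\tilde w^{j}\circ\overline{\tilde w^{j}},
\]
which is exactly the desired identity. The only step that requires any care — and which I would flag as the main obstacle — is ensuring that dropping the columns corresponding to zero singular values is legitimate; this is where the pruning to the index set $K$ matters, since otherwise orthogonality would be trivially destroyed by repeated zero columns and one could not declare the $\tilde v^i$ \emph{non null}. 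That subtlety is resolved precisely because the columns $u^k$ with $k\in K$ remain an orthonormal set spanning the range of $\vu$, and scaling by the positive $s_k$ preserves both orthogonality and the generated subspace.
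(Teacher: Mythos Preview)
Your proposal is correct and follows essentially the same approach as the paper: the paper's proof of this theorem is precisely the singular value decomposition argument developed in the paragraphs immediately preceding the statement, which you have accurately summarized and applied symmetrically to both $\vu$ and $\wu$. Your identification of the pruning step (restricting to the index set $K$ of nonzero singular values) as the point needing care matches exactly what the paper does to guarantee the resulting vectors are nonzero and orthogonal.
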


\begin{remark}\label{rem: vectores de condicion soporte pueden ser acotados}
 		Observe that in Definition \ref{def: soporte} the vectors $\{v^i\}_{i=1,\dots ,r}$ of the subspace $V$ are not required to be linearly independent nor generators of $V$, and similarly for $\{w^j\}_{j=1,\dots ,s}$ in $W$, but the previous theorem states that orthonormal vectors can be chosen. Moreover, these vectors can be taken bounded in norm with a fixed constant $C$ after multiplying all of them by $\frac C{\|x\|}$ where $\|x\|$ is the greatest norm of all the vectors considered.
\end{remark}

\begin{definition}\label{def: momento del sistema}
If $\{v^1, \dotsc,v^p\}$ is a system of $p$ vectors in $\C^n$, then the diagonal matrix (or corresponding vector) $\Phi(\underline{v}\ \underline{v}^*)$ will be called the \textbf{moment} of the system $\{v^1,\dotsc,v^p\}$ (with the notation of $\underline{v}\in \C^{n\times p}$ detailed in \eqref{def: v barra}).
\end{definition}

Therefore the previous discussion also proved the following result.
\begin{proposition}
 \label{teo: base ortogonal con mismo momento}
 If $\{v^1,\dotsc,v^p\}$ is a system of $p$ linearly independent vectors in $V$, then there is an orthogonal basis $\{c^1,\dotsc, c^p\}$ of $V$ with the same moment than that of $\{v^1,\dotsc,v^p\}$.
 \end{proposition}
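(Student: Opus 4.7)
The plan is essentially to formalize the SVD construction already carried out in the paragraphs preceding the statement, and simply observe that linear independence removes the degeneracy that forced the passage from $\widetilde{\underline{v}}$ to $\widetilde{\underline{v'}}$.

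First I would form the $n\times p$ matrix $\underline{v}=(v^1,\dots,v^p)$ as in \eqref{def: v barra} and take its singular value decomposition $\underline{v}=u s x^*$, with $u\in U(n)$, $x\in U(p)$, and $s\in \C^{n\times p}$ the rectangular diagonal matrix carrying the singular values $s_1,\dots,s_p\geq 0$ of $\underline{v}$. The crucial observation is that, because $\{v^1,\dots,v^p\}$ is linearly independent, $\underline{v}$ has rank $p$, so $s_j>0$ for every $j=1,\dots,p$. In particular the index set $K$ appearing in \eqref{eq: base de vectores ortogonal con igual momento} is all of $\{1,\dots,p\}$, and there is no need to discard columns: $\widetilde{\underline{v'}}=\widetilde{\underline{v}}$.

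Next I would define $c^j:=s_j u^j$ for $j=1,\dots,p$, where $u^j$ are the first $p$ columns of $u$. These vectors are pairwise orthogonal and nonzero (since $s_j>0$ and the $u^j$ are orthonormal). To check that they form an orthogonal basis of $V$, I would use that the columns of $\widetilde{\underline{v}}=(c^1,\dots,c^p)$ coincide with those of $\underline{v}\, x$, so each $c^j$ is a linear combination of the $v^i$'s and hence lies in $V$; being $p$ linearly independent vectors in the $p$-dimensional space $V=\mathrm{span}\{v^1,\dots,v^p\}$, they span $V$.

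Finally, equality of the moments follows exactly as in \eqref{eq: diagonal de vu igual a diagonal de autovectores por valores singulares}: computing entrywise,
\begin{equation*}
\langle \widetilde{\underline{v}}\,\widetilde{\underline{v}}^* e_i,e_i\rangle
=\sum_{j=1}^p s_j^2|u^j_i|^2
=\langle \underline{v}\,\underline{v}^* e_i,e_i\rangle,
\end{equation*}
so $\Phi(\underline{c}\,\underline{c}^*)=\Phi(\widetilde{\underline{v}}\,\widetilde{\underline{v}}^*)=\Phi(\underline{v}\,\underline{v}^*)$, which is the desired equality of moments in the sense of Definition \ref{def: momento del sistema}. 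I do not anticipate any real obstacle here; the only substantive point is the remark that linear independence ensures the singular values are all strictly positive, which is what turns the general construction of Theorem \ref{teo: en def. soporte se puede tomar l.i. y ortog.} into an honest orthogonal basis of $V$ rather than an orthogonal generating set of possibly smaller cardinality.
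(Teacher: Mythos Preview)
Your proof is correct and follows essentially the same route as the paper: the proposition is stated immediately after the SVD construction and the paper simply says ``the previous discussion also proved the following result,'' which is exactly the argument you spell out. Your added observation that linear independence forces all singular values $s_j>0$, so that $K=\{1,\dots,p\}$ and $\widetilde{\underline{v}}=\widetilde{\underline{v'}}$, is precisely the point that specializes the general construction to yield an orthogonal basis of $p$ vectors rather than fewer.
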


\begin{remark} \label{rem: equiv soporte igualdad diags} Observe that if for $V\perp W$ subspaces of $\C^n$, there exist $\{v^1, \dots,v^p\}\subset V$, $\{w^1, \dots,w^q\}\subset W$, and we define $\vu\in \C^{n\times p}$ and $\wu\in\C^{n\times q}$ as in \eqref{def: v barra}, then the equality 	
	\begin{equation}
	\Phi(\vu\ \vu^*)=\Phi(\wu\ \wu^*) 
	\end{equation}
	is equivalent to the fact that $(V,W)$ is a support in $\C^n$ (see \eqref{eq: def diag vu.vu}).
\end{remark}

Given a support $(V,W)$ of $\C^n$ Proposition \ref{teo: base ortogonal con mismo momento} implies that there exists an orthogonal set $\left\{{v^i}\right\}_{i=1}^{p}$ for $V$ and $\left\{{w^j}\right\}_{j=1}^{q}$ for $W$ that satisfy \eqref{eq: condicion soporte}. Now consider the orthonormal corresponding set after normalizing each vector.
Now adding all the equations in (\ref{eq: condicion soporte}) we obtain that $\sum_{i=1}^p\|v^i\|^2=\sum_{j=1}^q\|w^j\|^2$, and then
$$
\sum_{i=1}^p \frac{\|v^i\|^2}{\sum_{k=1}^p \|v^k\|^2}
\left(\frac{v^i}{\|v^i\|}\circ  {\frac{\overline{v^i}}{\|v^i\|}}\right) = 
\sum_{j=1}^q \frac{\|w^j\|^2}{\sum_{k=1}^q \|w^k\|^2}
\left(\frac{w^j}{\|w^j\|}\circ {\frac{\overline{w^j}}{\|w^j\|}}\right)
$$

which in turns implies that Condition 1 stated in \eqref{interseccion convexa no vacia BON particular} holds.
%$
%\text{co}\left(\left\{\frac{v^i}{\|v^i\|}\circ  {\frac{\overline {v^i}}{\|v^i\|}}\right\}_{i=1}^r\right) 
%\medcap
% \text{co}\left(\left\{\frac{w^j}{\|w^j\|}\circ {\frac{\overline {w^j}}{\|w^j\|}}\right\}_{j=1}^s\right) \neq \emptyset.
%$
Then statement (b) of Corollary 3 in 
\cite{andruchow_larotonda_recht_varela_Companneras} is fulfilled and $M=\lambda\ P_V-\lambda\ P_W+R\in M^h_n(\C)$ (with $P_VR=P_W 
R=0$, $R\in M_n^h(\C)$, $\|R\|\leq\lambda >0$) is a minimal matrix in the sense 
that $\|M\|\leq \|M+D\|$ for all real diagonal matrices $D\in \mnh$ and $\|\ 
\|$ the spectral norm (see 
\cite{andruchow_larotonda_recht_varela_Companneras}). Then a support allows the 
construction of a minimal matrix, and vice versa. 
In the following theorem we collect some statements that are equivalent to the definition of a support.

\begin{theorem}\label{teo: equival def soporte}
	Let $V, W$ be two non trivial orthogonal subspaces of $\C^n$, then the following statements are equivalent.
	 \begin{enumerate}
	 	\item $(V,W)$ is a support, that is, there exist non trivial subsets $\{v^1,v^2,\dotsc,v^p\}$ of $V$ and $\{w^1,w^2,\dotsc,w^q\}$ of $W$ such that \eqref{eq: condicion soporte} holds.
	 	\item The hermitian matrix $M=\lambda (P_V-P_W)+R$ is minimal (see \eqref{eq: def matriz minimal}) for every $\lambda\in\R_{>0}$, $R\in M_n^h(\C)$, $\|R\|\leq \lambda$, $R(P_V +P_W)=0$.
	 	\item There exist non trivial subsets $\{v^1,v^2,\dotsc,v^p\}$ of $V$ and $\{w^1,w^2,\dotsc,w^q\}$ of $W$ such that 
	 	$$ \Phi(\vu \ \vu^*)=\Phi(\wu \ \wu^*)$$
	 	with $\vu$ and $\wu$ defined as in \eqref{def: v barra} and $\Phi(m)$ the diagonal of $m$. 
	 	\item The sets 	
	 	$
	 	\sigma_V=\{c\in M_n^h(\C):P_V c=c\geq 0,\ \tr(c)=1\}$ and $\sigma_W=\{d\in M_n^h(\C): P_W d=d\geq 0,\ \tr(d)=1\}$, 
	 	satisfy  
	 	$$
	 	\Phi(\sigma_V)\cap\Phi(\sigma_W)\neq\emptyset .
	 	$$
%	 	\item There exist a non-zero $X\in M_n^h(\C)$ with $\Phi(X)=0$, $X^+=\frac{|X|+X}{2}$ and $X^-=\frac{|X|-X}{2}$ its positive and negative parts, such that
%	 	$$
%	 	P_V X^+=X^+ \text{   and   } P_W X^-=X^-.
%	 	$$
	  \end{enumerate}
\end{theorem}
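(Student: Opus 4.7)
My plan is to prove the chain of equivalences by traversing $(1)\Leftrightarrow(3)\Leftrightarrow(4)$ essentially ``by hand,'' and then invoking the companion paper \cite{andruchow_larotonda_recht_varela_Companneras} for $(1)\Leftrightarrow(2)$.

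First, the equivalence $(1)\Leftrightarrow(3)$ is immediate: by equation \eqref{eq: def diag vu.vu} the $i$-th diagonal entry of $\Phi(\vu\,\vu^*)$ is exactly $\sum_k |v^k_i|^2$, so writing the scalar condition \eqref{eq: condicion soporte} row-by-row is the same as writing $\Phi(\vu\,\vu^*)=\Phi(\wu\,\wu^*)$. This needs no actual argument, only a restatement.

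Next, for $(1)\Rightarrow(4)$: starting from support vectors, I would first apply Theorem \ref{teo: en def. soporte se puede tomar l.i. y ortog.} to replace them by orthogonal families $\{v^i\}\subset V$ and $\{w^j\}\subset W$ with the same moments. Adding the rows of \eqref{eq: condicion soporte} gives $\sum_i \|v^i\|^2 = \sum_j \|w^j\|^2 =: T > 0$. Set
\[
c = \frac{1}{T}\sum_i v^i (v^i)^*, \qquad d = \frac{1}{T}\sum_j w^j (w^j)^*.
\]
Since the $v^i$ lie in $V$, we have $P_V c = c$, and $c\geq 0$ with $\tr(c)=\frac{1}{T}\sum_i\|v^i\|^2 = 1$, so $c\in\sigma_V$; likewise $d\in\sigma_W$. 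The equality $\Phi(c)=\Phi(d)$ is exactly \eqref{eq: condicion soporte} divided by $T$. For the converse $(4)\Rightarrow(1)$, take $c\in\sigma_V$ and $d\in\sigma_W$ with $\Phi(c)=\Phi(d)$. Since $P_V c = c \geq 0$, the spectral theorem gives $c = \sum_i \mu_i\, e^i(e^i)^*$ with $\{e^i\}\subset V$ orthonormal, $\mu_i\geq 0$ and $\sum_i \mu_i = 1$; set $v^i = \sqrt{\mu_i}\,e^i$, so $c = \vu\,\vu^*$. Doing the same for $d = \wu\,\wu^*$ in $W$, the hypothesis $\Phi(c)=\Phi(d)$ becomes $\Phi(\vu\,\vu^*)=\Phi(\wu\,\wu^*)$, which is (3), and hence (1) by the first equivalence.

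Finally, for $(1)\Leftrightarrow(2)$ I would appeal to Corollary 3 of \cite{andruchow_larotonda_recht_varela_Companneras}, which is precisely the statement that a hermitian matrix of the form $M=\lambda(P_V-P_W)+R$ (with $R$ orthogonal to $V\oplus W$ and $\|R\|\leq\lambda$) is minimal in the sense of \eqref{eq: def matriz minimal} if and only if Condition~1 of \eqref{interseccion convexa no vacia BON particular} holds. Once we have (1), a normalization argument exactly like the one displayed in the paragraph immediately preceding the theorem (dividing each $v^i\circ\overline{v^i}$ by $\sum_k\|v^k\|^2$ and using the same common sum on both sides) converts the support equation into the convex-hull intersection of Condition~1; conversely, any point in the convex-hull intersection gives probability weights that, redistributed as $v^i\leftarrow\sqrt{\alpha_i}\,v^i$, produce a support in the sense of Definition~\ref{def: soporte}. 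Moreover the statement that $M$ is minimal \emph{for every} admissible $(\lambda,R)$ follows because Condition~1 depends only on the subspaces $V,W$, not on $\lambda$ or $R$.

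The main obstacle I anticipate is bookkeeping rather than ideas: one must keep track of the distinction between orthonormal bases and arbitrary spanning families when going between the moment formulation (3) and the density-matrix formulation (4), and use Theorem \ref{teo: en def. soporte se puede tomar l.i. y ortog.} to pass freely between them. The appeal to \cite{andruchow_larotonda_recht_varela_Companneras} for $(1)\Leftrightarrow(2)$ is essentially a black box, and once $(1),(3),(4)$ are tied together, inserting (2) into the cycle is routine.
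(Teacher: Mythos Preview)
Your proposal is correct. The equivalences $(1)\Leftrightarrow(3)$ and $(1)\Leftrightarrow(2)$ match the paper's proof exactly: the first is just Remark~\ref{rem: equiv soporte igualdad diags}, and the second is deferred to Corollary~3 of \cite{andruchow_larotonda_recht_varela_Companneras} via the normalization argument displayed just before the theorem.

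Where you differ is in how $(4)$ is tied into the cycle. The paper links $(4)$ to $(2)$ by citing the discussion after Corollary~3 in \cite{andruchow_larotonda_recht_varela_Companneras} (or alternatively the identity \eqref{eq: adecuacion es cuadrado de dist sigmaV a sigmaW}), so both implications involving $(4)$ are black-boxed. You instead prove $(3)\Leftrightarrow(4)$ directly: from support vectors you build $c=\tfrac{1}{T}\vu\,\vu^*\in\sigma_V$ and $d=\tfrac{1}{T}\wu\,\wu^*\in\sigma_W$ with matching diagonals, and conversely you spectrally decompose $c\in\sigma_V$, $d\in\sigma_W$ to recover $\vu,\wu$. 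This is a genuinely more self-contained route, since it reduces the external dependence to the single citation needed for $(1)\Leftrightarrow(2)$. One small redundancy: your invocation of Theorem~\ref{teo: en def. soporte se puede tomar l.i. y ortog.} in the $(1)\Rightarrow(4)$ direction is unnecessary --- the construction $c=\tfrac{1}{T}\sum_i v^i(v^i)^*$ lands in $\sigma_V$ for \emph{any} family $\{v^i\}\subset V$, orthogonal or not.
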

\begin{proof}
	1. $\Leftrightarrow$ 2. follows after the previous discussion.
	\\
	2. $\Leftrightarrow$ 4. is proved using the comments following the proof of Corollary 3 in  \cite{andruchow_larotonda_recht_varela_Companneras} or the property mentioned in \eqref{eq: adecuacion es cuadrado de dist sigmaV a sigmaW}.
	\\
	1.  $\Leftrightarrow$ 3. is Remark \ref{rem: equiv soporte igualdad diags}.
%	\\
%	2. $\Leftrightarrow$ 5. is proved using Theorem 3 of \cite{andruchow_larotonda_recht_varela_Companneras}.
\end{proof}

\begin{proposition}\label{prop: Srs es cerrado en las banderas}
	The set of supports $\srs$ is closed in the flag manifold $\frs$.
\end{proposition}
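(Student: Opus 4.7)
The plan is to use the characterization in Theorem~\ref{teo: equival def soporte}(4): being a support is equivalent to the condition $\Phi(\sigma_V)\cap\Phi(\sigma_W)\neq\emptyset$, where $\sigma_V=\{c\in M_n^h(\C):P_Vc=c\geq 0,\ \tr(c)=1\}$ and similarly for $\sigma_W$. Each $\sigma_V$ is a compact convex set (PSD, trace one, range in $V$), so closedness of $\srs$ should follow from a compactness-plus-continuity argument, once convergence in $\frs$ is translated into convergence of the associated orthogonal projections.

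Concretely, I would take a sequence $(V_k,W_k)\in\srs$ with $(V_k,W_k)\to(V,W)$ in $\frs$. The standard realization of the flag manifold $F(r,r+s,n)$ as a quotient of $U(n)$ shows that this convergence is equivalent to $P_{V_k}\to P_V$ and $P_{W_k}\to P_W$ in operator norm. For each $k$ the characterization applied to $(V_k,W_k)$ provides $c_k\in\sigma_{V_k}$ and $d_k\in\sigma_{W_k}$ with $\Phi(c_k)=\Phi(d_k)$. Both sequences sit inside the bounded set of trace-one PSD hermitian matrices, so after extracting a subsequence I may assume $c_k\to c$ and $d_k\to d$ in $M_n^h(\C)$.

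The last step is to pass the defining relations to the limit. From $P_{V_k}c_k=c_k$ I obtain $P_Vc=c$; the conditions $c_k\geq 0$ and $\tr(c_k)=1$ are both closed under norm limits, hence $c\in\sigma_V$, and symmetrically $d\in\sigma_W$. Continuity of $\Phi$ gives $\Phi(c)=\lim\Phi(c_k)=\lim\Phi(d_k)=\Phi(d)$, so $\Phi(c)\in\Phi(\sigma_V)\cap\Phi(\sigma_W)$ and $(V,W)\in\srs$ by Theorem~\ref{teo: equival def soporte}(4).

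The only step that needs a word of justification is the translation between convergence of a flag $(V_k,W_k)\in\frs$ and norm convergence of the projections $P_{V_k},P_{W_k}$; once that identification is invoked, no estimates remain and everything is driven purely by the compactness of the $\sigma_{V_k}$ and the continuity of $\Phi$. A parallel vector-based proof is possible using Theorem~\ref{teo: en def. soporte se puede tomar l.i. y ortog.} and Remark~\ref{rem: vectores de condicion soporte pueden ser acotados} (choose orthogonal witnessing systems $\vu_k,\wu_k$ normalized by $\tr(\vu_k^*\vu_k)=1$, write them as $E_kA_k$ with $E_k$ an orthonormal frame of $V_k$, extract a limit, and pass the moment equation through $\Phi$), but the operator-valued formulation above seems cleaner and sidesteps the need to handle the ranks of the witnessing systems by subsequence.
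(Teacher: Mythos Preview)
Your argument is correct. It differs from the paper's proof in that you invoke the operator-valued characterization of Theorem~\ref{teo: equival def soporte}(4), working with the compact sets $\sigma_{V_k},\sigma_{W_k}\subset M_n^h(\C)$, whereas the paper uses the vector-based description (item~(3), equivalently Remark~\ref{rem: equiv soporte igualdad diags}): it picks witnessing systems $\vu_k,\wu_k$ with columns bounded in norm (via Remark~\ref{rem: vectores de condicion soporte pueden ser acotados}), passes to a subsequence so that the $\vu_k$ have a fixed number of columns and converge coordinatewise, and then sends $\Phi(\vu_k\,\vu_k^*)=\Phi(\wu_k\,\wu_k^*)$ to the limit. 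Your route is tidier: the witnesses $c_k,d_k$ already live in the fixed ambient space $M_n^h(\C)$, so no bookkeeping on the number of columns is needed, and the trace normalization $\tr(c_k)=1$ guarantees automatically that the limit is nontrivial (a point the paper's version handles less explicitly). The paper's approach, on the other hand, stays closer to the defining equations~\eqref{eq: condicion soporte} and does not require the equivalence $1\Leftrightarrow 4$ of Theorem~\ref{teo: equival def soporte}, which in the paper is obtained by citing an external reference. You already anticipated this alternative in your final paragraph; both arguments are complete.
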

\begin{proof}
	Consider a sequence of pairs of supports given by $\{(V_k,W_k)\}_{k\in\N}\subset \srs$ and such that its corresponding orthogonal projections converge. It is apparent that there exist $V$ and $W$ subspaces of $\C^n$ such that $\dim(V)=r$, $\dim(W)=s$, $V\perp W$ and satisfy $\lim_{k\to\infty}P_{V_k}=P_V$ and $\lim_{k\to\infty}P_{W_k}=P_W$, that is, $(V,W)\in \frs$. 
	We only need to prove that the condition \eqref{eq: condicion soporte} holds.
	\\
	Consider for each pair $(V_k,W_k)$ a pair of matrices $(\vu_k,\wu_k)$ that satisfy 
	\begin{equation}\label{eq: igualdad diag vu wu}
	\Phi(\vu_k\ \vu_k^*)=\Phi(\wu_k\ \wu_k^*)
	\end{equation}
	as in Remark \ref{rem: equiv soporte igualdad diags}. 
	\\
	Note that as mentioned in Remark \ref{rem: vectores de condicion soporte pueden ser acotados} we can choose the column vectors of the matrices $\vu_k$ and $\wu_k$ with norm less or equal than one.
	Then using compacity arguments and after taking subsequences we can suppose that the matrices $\vu_k$ are of the same size, and their columns converge to vectors in $V$ that form a matrix $\vu$. Similar arguments can be used for $\wu_k$ to obtain a matrix $\wu$. Since for all $k$ equality \eqref{eq: igualdad diag vu wu} holds, then $\lim_{k\to\infty}\Phi(\vu_k\ \vu_k^*)=\lim_{k\to\infty}\Phi(\wu_k\ \wu_k^*)$ which is $\Phi(\vu \ \vu^*)=\Phi(\wu \ \wu^*)$. Since this is equivalent to the equalities \eqref{eq: condicion soporte} (see Remark \ref{rem: equiv soporte igualdad diags}) then $(V,W)$ is a support. 
\end{proof}

\section{Symplectic interpretation of the map $\Phi$}
Consider the manifold $M=(\C^n)^r$ composed of matrices $\vu$ defined in \eqref{def: v barra}. We denote by $\vu_k$, $k=1,\ldots, n$ the rows of $\vu$ considered as vectors in $\C^r$.

Since $\C^n$ carries a natural symplectic form, so does $M$ (the product form). In this way, $M$ becomes a symplectic manifold. We consider next the left operation action of the unitary group $U(n)$ on $M$. This operation is symplectic. Now we identify the Lie algebra $u(n)$ of $U(n)$ with its dual $u^*(n)$ using the inner product
$\langle A,B\rangle = tr(AB^*).$ 

In this context the moment map $\mu:M\to u^*(n)$ can be computed explicitly:
\begin{equation}
\mu(\vu) =\frac{i}{2}\vu\ \vu^*\quad \text{, for}\quad \vu\in M.
\end{equation}

Observe that the entries of the matrix $\mu(\vu)$ are
\begin{equation}(\mu(\vu))_{k,l}=\frac{i}{2}\langle\vu_k,\ \vu_l\rangle  \ \ \text{, for } k=1\dotsc, n \text{ and }  l=1,\dotsc, n.
\end{equation}
and the entries of the diagonal are
\begin{equation}
(\mu(\vu))_{k,k}=\frac{i}{2}\norm{\vu_k}^2 \ \ \text{, for } k=1,\dotsc, n.
\end{equation}

Finally, observe that for the induced left action of the diagonal unitary matrices $\T^n\subset U(n)$ on $M$, the corresponding moment map $\mu_d$ is obtained as follows
$\mu_d : M \to (t^n)^*$
\[\mu_d(\vu)= \frac{i}{2}
\begin{pmatrix}
\norm{\vu_1}^2\\
\vdots\\
\norm{\vu_n}^2\\
\end{pmatrix},
\]
which is exactly $i/2$ times what was called the \textit{moment of the system}
$\{v^1, \ldots, v^r\}$ in Definition \ref{def: momento del sistema}.

\section{Adequacy of a pair of orthogonal subspaces}\label{sec: adecuacion}

Recall that with ${\mathcal F}_{(r,s)}$ we denote the space pairs of orthogonal subspaces $(V,W)$ of $\C^n$ with $r= \dim V$ and $s=\dim W$. Also 
consider systems $\vu=(v^1,\ldots,v^r):\C^r\to C^n$ as in \eqref{def: v barra} and similarly $\wu=(w^1,\ldots,w^s):\C^s\to C^n$ of vectors in $\C^n$ such that $\im (\vu)\subset V$ and $\im (\wu)\subset W$.

%Think 
%of $\vu$ as a map $\vu:\C^r\to C^n$ putting the vectors $v^1,\ldots,v^r$ as 
%columns of the matrix $\vu$ as in \eqref{def: v barra} and similarly for $\wu$. 
%The system $\vu$ is  called \textit{subordinate} \textcolor{red}{HACE FALTA ESTA DEF????}
%to $V$ if $\im (\vu)\subset V$ (equivalently, if the subspace generated by 
%$w^1,\ldots,w^s$ is contained in $W$).

Recall that in Definition \ref{def: momento del sistema} we called $\Phi(\vu\  \vu^*)$ the moment of the system $\vu$ where $\Phi$ is the conditional expectation that associates to any $n\times n$ matrix its diagonal part. This map takes its values in the subalgebra $\mathcal{D}_n$ of diagonal $n\times n$ matrices which will sometimes be identified with $\C^n$. Observe that the map $\vu\mapsto \Phi(\vu\ \vu^*)$ is homogeneous in  the following sense:
\begin{equation}\label{eq: homogeneidad de el peso w}
\Phi\left((\alpha \vu) (\alpha \vu)^*\right) = |\alpha|^2 \Phi(\vu\ \vu^*).
\end{equation}
%More generally we can extend the weight to
%\[\omega(\vu,\vu')=\Phi(\vu (\vu')^*).\]
%This generalization of the weight is a sort of inner product with values in the subalgebra $\mathcal{D}$. In this notation, $\omega(\vu)$ can be written as
%%
%\[\omega(\vu,\vu)\quad\text{the square of the norm of }\vu\]
%%
%
%In this notes we consider pairs $(\vu,\wu)$ of systems
%\[ 
%\vu=(v^1,...v^r) \ \text{ and } \ \wu=(w^1,...w^s)
%\]
%where
%$v^i\perp w^j$ for all $i,j$. The space of these pairs will be denoted by $P(r,s)$.

%Consider a pair $(V,W)\in \mathcal{F}_{(r,s)}$ and $(\vu,\wu)$..............

Recall that with this notation $(V,W)$ is called a support (see \ref{eq: condicion soporte} and Theorem \ref{teo: en def. soporte se puede tomar l.i. y ortog.} and Proposition \ref{teo: base ortogonal con mismo momento}) if there is a non trivial pair $(\vu, \wu)$ with $\im(\vu)\subset V$, $\im(\wu)\subset W$ such that
\[
\Phi(\vu\ \vu^*) = \Phi(\wu\ \wu^*)
\]
(here non trivial refers to $\vu\ne (0,\ldots 0)$ and $\wu\ne (0,\ldots 0)$).

Observe that if there is a non trivial pair $(\vu,\wu)$ as before such that $\Phi(\vu\ \vu^*)$ and $\Phi(\wu\ \wu^*)$ are only linearly dependent then choosing $\alpha\in\R$ appropriately we can get $\Phi\left((\alpha\vu)(\alpha\vu) ^*\right) = \Phi(\wu\ \wu^*)$, with $\im(\alpha \vu)\subset V$ so that $(V,W)$ is a support.  

The objective of this section is to define and compute a ``numerical obstruction'' for the pair $(V,W)$ to be a support, i.e. a non negative invariant of $(V,W)$ which vanishes if and only if the pair $(V,W)$ is a support.
We will call this obstruction the \textit{adequacy} of $(V,W)$.

Note that if \eqref{eq: condicion soporte} holds for the vector columns of $\vu$ and $\wu$ then $\tr( \vu\  \vu^*)=\tr( \wu\ \wu^*)$ follows. Then the remark made in \eqref{eq: homogeneidad de el peso w} about the homogeneous nature of $\omega$ allow us to restrict to the space of pairs $(\vu, \wu)$ that are ``normalized'' in the sense that
\[
\tr( \vu\ \vu^*)=1\quad \text{and}\quad \tr( \wu\ \wu^*) = 1.
\]
Observe that in the space $\hom(\C^r,V)$ we have a natural norm given by $\tr(\vu\ \vu^*)^{1/2}$ and the same holds for  $\hom(\C^s,W)$. Therefore if we denote with 
\begin{equation}\label{def: S V y S W}
S_V  \text{ and } S_W \text{ the unit spheres of } \hom(\C^r,V) \text{ and } \hom(\C^s,W)
\end{equation}
respectively, then the selected pairs $(\vu,\wu)$ belong to $S_V\times S_W$.

%Now first we measure the ``gap'' of $(\vu,\wu)$ as
%\[\gap(\vu, \wu) = ||\omega(\vu)-\omega(\wu)||\]
%Here $\omega(\vu)$ and $\omega(\wu)$ are in $\R^n$ and $(\vu, \wu)\in S_V\times 
%S_W$.
Finally we define the adequacy of the pair $(V,W)$.
\begin{definition}\label{def: adecuacion}
	Given a pair of non trivial orthogonal subspaces $V, W \subset \C^n$, its \textbf{adequacy} is 
	defined as the number 
	\begin{equation}\label{def: definicion de adequacy}
	%\begin{split}
	\delta(V,W) 
%	&= \inf\{\gap^2(\vu,\wu):(\vu,\wu)\in S_V\times S_W\}\\
%	&= \inf\{||\omega(\vu)-\omega(\wu)||^2:(\vu,\wu)\in S_V\times S_W\}\\
%	&
	= \inf\left\{|| \Phi(\vu\  \vu^*)-\Phi(\wu\  \wu^*)||^2:(\vu,\wu)\in S_V\times S_W\right\}
	%\end{split}
	\end{equation}
	with $S_V$ and $S_W$ defined in \eqref{def: S V y S W}.
\end{definition}

Since $S_V\times S_W$ is compact there always exist $(\vu,\wu)$ in $S_V\times 
S_W$ such  that $\delta(V,W)$ is attained. Note that $\delta(V,W)=0$ 
implies that the subspaces $V$ and $W$ form a support (see Definition \ref{def: 
	soporte}).

Next, in order to compute $\delta(V,W)$ we introduce convenient parameters.
\begin{itemize}
	\item First we fix two isometries
	\begin{equation*}\label{eq: isometrias V y W}
	\mathcal V:\C^r\to V,\qquad \mathcal W:\C^s\to W.
	\end{equation*}
	Observe that in particular, $P_V= \mathcal V\mathcal V^*$ and $P_W= \mathcal W\mathcal W^*$ are the orthogonal projections in $\C^n$ onto $V$ and $W$ respectively.
	\item Then any morphism $f:\C^r\to V$ is of the form $f=\vb g$ for $g:\C^r\to \C^r$ a linear map. If we write the polar form $g=a u$ where $a\ge 0$ and $u$ is unitary, we have
	$f=\vb au$. Therefore we observe, in relation to the problem of parametrization: 
	\begin{enumerate}
		\item $\tr (f f^*) = \tr (\vb a^2 \vb^*) = \tr (\vb^* \vb a^2) = \tr (a^2)$ so 
		that $f\in S_V$ if and only if $\tr( a^2) = 1$.

		\item And we have $\Phi(f f^*) = \Phi(\vb a^2 \vb^*)$.
	\end{enumerate}
\end{itemize}
Similar considerations can be done for $\wb$ and $S_W$.

In view of these remarks we parametrize the problem of finding the minimum of $\delta(V,W)$ as follows.

The parameter space will be 
$
\Sigma=\Sigma_r\times\Sigma_s \ \text{, where }
$ 
\begin{equation}
\label{eq: definicion de Sigma sub r}
\Sigma_r=\{a\in M_r^h(\C):    \tr (a^2)=1\} \ \text{ and }
\ 
\Sigma_s=\{b\in M_s^h(\C):    \tr( b^2)=1\}
\end{equation}
are the unit spheres of the self-adjoint matrices (positive or not) of sizes $r\times r$ and $s\times s$.

The function we have to minimize is $F:\Sigma\to [0,+\infty)$, defined by 
\begin{equation}\label{def: funcion F}
F(a,b) = ||\Phi(\vb a^2\vb^*)-\Phi(\wb b^2\wb^*)||^2
\end{equation}
where the norm is given by $\|x\|=\sqrt{Tr(x^*x)}$.
Its minimum value is the adequacy 
\begin{equation}\label{eq: adequacy y funcion F}
\delta(V,W)=\min_{(a,b)\in\Sigma} F(a,b).
\end{equation}

In the next computations, in order to alleviate the notation, we will write
\begin{equation}
\label{def: Delta}
\Delta = \Delta(a,b) = \Phi(\vb a^2 \vb^*) - \Phi(\wb b^2 \wb^*)
\end{equation}
\subsection{The gradient of $F$}
Now we let $a$ vary as a function of a real parameter $t$ and independently $b$ vary as a function of $u$. Then 
\[\frac{\partial F}{\partial t}=2\langle\frac{\partial \Phi(\vb a^2 \vb^*)}{\partial t}, \Delta\rangle\qquad \text{and} \qquad \frac{\partial F}{\partial u}=-2\langle\frac{\partial \Phi(\wb b^2 \wb^*)}{\partial u}, \Delta\rangle.
\]

If we denote with $\frac{d a}{d t} = X\ ,\ \frac{d b}{d u} = Y$, then
\[\frac{\partial F}{\partial t}= 2\langle\Phi(\vb (aX+Xa) \vb^*),\Delta\rangle
\quad \text{ and }\quad \frac{\partial F}{\partial u}=-2\langle\Phi(\wb (bY+Yb) \wb^*),\Delta\rangle.
\]
Here the inner products are traces of products, so using that $\Delta$ is diagonal we can write
\[
\frac{\partial F}{\partial t}= 2\tr(\vb (aX+Xa) \vb^*\Delta)\quad \text{ and }\quad\frac{\partial F}{\partial u}=-2\tr(\wb (bY+Yb) \wb^*\Delta).
\]
Therefore 
\[
\frac{\partial F}{\partial t}=  2\langle aX+Xa,\vb^*\Delta \vb\rangle_{M_r(\C)}
\quad \text{ and }\quad
\frac{\partial F}{\partial u}= -2\langle bY+Yb,\wb^*\Delta \wb\rangle_{M_s(\C)}.
\]
where the inner products now involved are the natural ones in $r\times r$ and $s\times s$ matrices using the corresponding traces.

If in these algebras $M_r(\C)$ and $M_s(\C)$ we consider the operators  
\begin{equation}\label{def: def Sa y Sb}
S_a(X) = aX+Xa,  \ \text{ and } \ S_b(Y) = bY+Yb,
\end{equation}
then its adjoints (for the natural inner products) are precisely $S_{a^*}=S_a$ and $S_{b^*}=S_b$ since $a, b$ are self-adjoint. 
So we can write
\[
\dfrac{\partial F}{\partial t}=  2\langle X,S_a(\vb^*\Delta \vb)\rangle
\quad \text{ and }\quad
\dfrac{\partial F}{\partial u}=  -2\langle Y,S_b(\wb^*\Delta \wb)\rangle
.
\]
Therefore we obtained the following result.
\begin{theorem}
	The gradient of the function $F:\Sigma\to  [0,+\infty)$, 
	$F(a,b) = ||\Phi(\vb a^2\vb^*)-\Phi(\wb b^2\wb^*)||^2$ on the riemannian manifold 
	$\Sigma=\Sigma_r\times \Sigma_s$ at $(a,b)$ (with $\Sigma_r$ and $\Sigma_s$ as 
	in \eqref{eq: definicion de Sigma sub r}) is
	\begin{equation} \label{eq:grad}
	\grad_{(a,b)}F=2\left( S_a(\vb^*\Delta\vb)_{tan}, -S_b(\wb^*\Delta\wb)_{tan} \right)
	\end{equation}
	where the subscript ``$tan$'' refers to the tangential component (to the sphere $\Sigma_r\times\Sigma_s$) of the corresponding vector: $X_{tan}=X-\langle X,a\rangle a$, for $X\in M_r(\C)$ and  $Y_{tan}=Y-\langle Y,b\rangle b$, for $Y\in M_s(\C)$, $\Delta$ is defined in \eqref{def: Delta}, $S_a$, $S_b$ in \eqref{def: def Sa y Sb} and $\vb$, $\wb$ are fixed isometries as in \eqref{eq: isometrias V y W}.
\end{theorem}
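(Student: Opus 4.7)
The plan is to finish the computation that the excerpt already carries most of the way, and then to project the resulting ambient gradient onto the tangent space of the product of spheres $\Sigma_r\times\Sigma_s$.

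First, I would collect the two partial derivatives already computed. Taking a smooth curve $(a(t),b(u))$ in $M_r^h(\C)\times M_s^h(\C)$ with $a'(0)=X$ and $b'(0)=Y$, the excerpt reaches
\[
\tfrac{\partial F}{\partial t}=2\langle X,S_a(\vb^*\Delta\vb)\rangle_{M_r(\C)},\qquad \tfrac{\partial F}{\partial u}=-2\langle Y,S_b(\wb^*\Delta\wb)\rangle_{M_s(\C)}.
\]
The two key identities behind this are: (i) $\Delta$ is diagonal, so $\langle\Phi(M),\Delta\rangle=\langle M,\Delta\rangle$ for any $M\in M_n(\C)$, which lets us drop $\Phi$; (ii) the trace identity $\tr(\vb C\vb^*\Delta)=\tr(C\vb^*\Delta\vb)$, which transports the inner product from $M_n(\C)$ to $M_r(\C)$; and (iii) the self-adjointness of $S_a$ and $S_b$ (immediate from $a=a^*$, $b=b^*$ and cyclicity of the trace). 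Since $\vb^*\Delta\vb$ and $\wb^*\Delta\wb$ are hermitian ($\Delta$ is real diagonal) and $S_a$ preserves $M_r^h(\C)$, both $S_a(\vb^*\Delta\vb)$ and $S_b(\wb^*\Delta\wb)$ lie in the ambient tangent space $M_r^h(\C)\times M_s^h(\C)$.

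Next, I would read off the ambient (Euclidean) gradient. The differential of $F$ at $(a,b)$ in the direction $(X,Y)\in M_r^h(\C)\times M_s^h(\C)$ is
\[
dF_{(a,b)}(X,Y)=\bigl\langle (X,Y),\bigl(2S_a(\vb^*\Delta\vb),-2S_b(\wb^*\Delta\wb)\bigr)\bigr\rangle,
\]
where the inner product on the product is the sum of the trace inner products. Hence the ambient gradient is the pair $2(S_a(\vb^*\Delta\vb),-S_b(\wb^*\Delta\wb))$.

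Finally, I would pass to the Riemannian gradient on $\Sigma_r\times\Sigma_s$. Since $\Sigma_r$ is the unit sphere of $(M_r^h(\C),\langle\cdot,\cdot\rangle)$ with outward unit normal at $a$ equal to $a$ itself (because $\|a\|^2=\tr(a^2)=1$), the tangent space at $a$ is the orthogonal complement of $\R a$, and the orthogonal projector is $X\mapsto X-\langle X,a\rangle a$. The analogous statement holds at $b\in\Sigma_s$. The Riemannian gradient on a submanifold equals the orthogonal projection of the ambient gradient onto the tangent space, so
\[
\grad_{(a,b)}F=2\bigl(S_a(\vb^*\Delta\vb)_{tan},-S_b(\wb^*\Delta\wb)_{tan}\bigr),
\]
which is the claimed formula. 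There is no serious obstacle in this proof; the only point that deserves care is checking that $\Phi$ may be removed against $\Delta$ and that the adjoint of $S_a$ with respect to the trace inner product is $S_a$ itself, both of which are routine consequences of the trace being tracial and of $a,b$ being hermitian.
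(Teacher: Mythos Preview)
Your proof is correct and follows essentially the same route as the paper: compute the directional derivatives by differentiating $\Delta$, use that $\Delta$ is diagonal to drop $\Phi$, rewrite the trace to pass from $M_n(\C)$ to $M_r(\C)$ (resp.\ $M_s(\C)$), use the self-adjointness of $S_a$ and $S_b$ to identify the ambient gradient, and then project onto the tangent space of the spheres. The only cosmetic difference is that you state the ambient-gradient/projection step explicitly, while the paper leaves it implicit in the phrase ``tangential component''.
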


%\medskip
%\textbf{Approximation of the adequacy $\delta(V,W)$.}
\subsection{Approximation of the adequacy $\delta(V,W)$.} \label{algoritmo para calcular adecuacion}
The previous theorem allow us to construct a gradient descent type algorithm to approximate 
the adequacy of a pair of orthogonal subspaces $(V,W)$ in $\C^n$.
\begin{enumerate}
	\item Starting with $V$ and $W$, construct the corresponding isometries 
	$\mathcal{V}\in \C^{n\times r}$ and 
	$\mathcal{W}\in \C^{n\times s}$ defined in \eqref{eq: isometrias V y W} (take 
	for example an orthonormal basis of $V$ and build the matrix $\mathcal{V}$ 
	whose columns are the vectors of that basis, similarly for $\mathcal{W}$).
	\item Choose randomly two positive definite trace one matrices $a_1\in 
	\C^{r\times r}$ and $b_1\in \C^{s\times s}$. 
	\item Then for $i=1,\dots, k$ calculate recursively: 
	\begin{enumerate}
		\item  
		$(a_i,b_i)-\grad_{(a_i,b_i)} F$ using the identity \ref{eq:grad}:
		\begin{eqnarray*}
			(a_i,b_i)-\grad_{(a_i,b_i)} F&=&
			\Big( a_i -2\Big( 
			S_{a_i}(\vb^*\Delta_i\vb)-\tr\left(S_{a_i}(\vb^*\Delta_i\vb) 
			a_i 
			\right)a_i\Big)\ ,\
			\\
			& &\quad
			b_i +2\Big(S_{b_i}(\wb^*\Delta_i\wb)-\tr\left(S_{b_i}(\wb^*\Delta_i\wb) 
			{b_i}\right){b_i}
			\Big)\Big)
		\end{eqnarray*}
		%	$$ 	
		%\begin{array}{rcl} 
		%	 \Big(&a_1&-2\big( S_a(\vb^*\Delta\vb)-\tr\left(S_a(\vb^*\Delta\vb) a 
		%	\right)a\big),\\
		%	&b_1&+2\left(S_b(\wb^*\Delta\wb)-\tr\left(S_b(\wb^*\Delta\wb) b\right) b 
		%	\right) \Big)
		%	\\
		%	\end{array}$$
		where $S_c(X)= cX+Xc$, and $\Delta_i=\Phi(\vb a_i^2 \vb^*) - \Phi(\wb b_i^2 
		\wb^*)$.
		\item 	Then consider 
		$(\alpha_{i+1},\beta_{i+1})=(a_i,b_i)-\grad_{(a_i,b_i)} 
		F$, and 
		define $a_{i+1}$ and $b_{i+1}$ as its modules with unit norm:
		$$a_{i+1}=\frac{1}{\tr(|\alpha_{i+1}|^2)^{1/2}}|\alpha_{i+1}|
		\ \text{ and } \ b_{i+1}=\frac{1}{\tr(|\beta_{i+1}|^2)^{1/2}}|\beta_{i+1}|$$
		\item If $i+1<k$ go back to step a) and continue the iteration with 
		$a_{i+1}$ and 
		$b_{i+1}$.
		\end{enumerate}
	\item After finishing the $k$ iterations compute $\tr(\Delta_{k+1} 
	\Delta_{k+1})$ 
	to approximate the adequacy $\delta(V,W)$ (see \ref{def: funcion F}).
\end{enumerate}

In Figure \ref{fig: ejemplo uso adecuacion} it is shown the output of several evaluations of the adequacy using the previous procedure on a pair of orthogonal subspaces moved with the multiplication of a curve of unitary matrices. 

\begin{figure}
	\begin{center}
		\epsfig{file=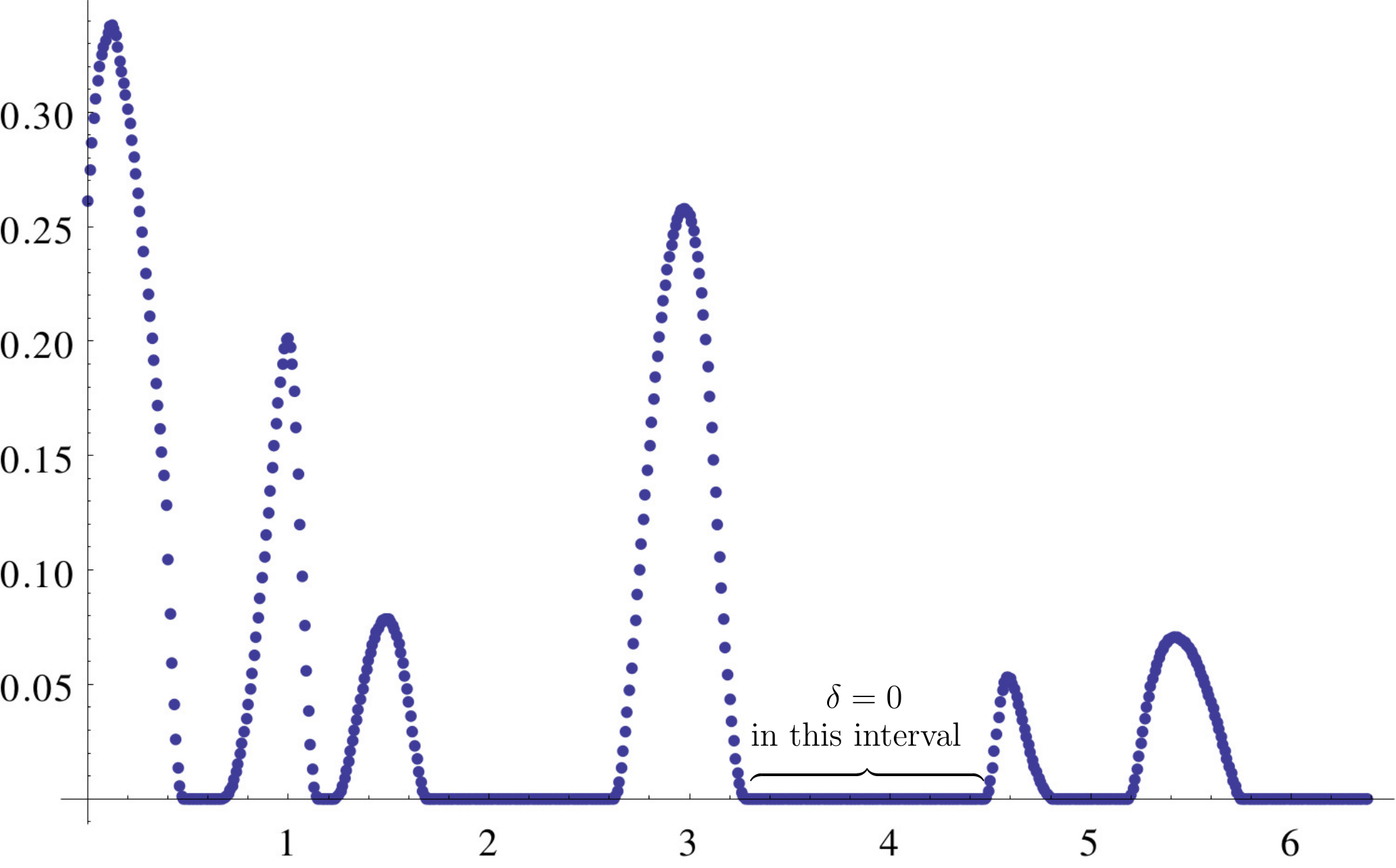,width=10cm}
	\end{center} 
	\caption{
		Plot of the points $(x_j,\delta(V_j,W_j))$, for $x_j=(j/100)$, the subspaces $V_j=e^{i x_j A}(V)$ and $W_j=e^{i x_j A}(W)$, with $j=1,\dots,650$, starting with $V\perp W$, and $A$ a self-adjoint matrix, using the algorithm mentioned in \ref{algoritmo para calcular adecuacion} to calculate the adequacy $\delta$. Observe the intervals where the approximation of the adequacy is null that suggest that for those values of $x_j$ the pairs $(V_j,W_j)$ form a support.}
		\label{fig: ejemplo uso adecuacion}
\end{figure} 
\begin{remark}
	Some of the examples presented in \ref{apendice A: ej 3x3}, 
	\ref{apendice B: ej 4x4} and \ref{apendice C: ej 5x5} were obtained using  the previous algorithm to approximate the adequacy.
\end{remark}
\medskip

\subsection{The critical points of $F$}
The point $(a,b) \in \Sigma$ is critical for $F$ if and only if
$S_a(\vb^*\Delta\vb)$ is normal to $\Sigma_r$ and $S_b(\wb^*\Delta\wb)$ is normal to $\Sigma_s$. Then we can state the following result.
\begin{theorem}
	The point $(a,b)\in \Sigma=\Sigma_r\times\Sigma_s$ is critical for $F$ if and only if
	\begin{equation}\label{eq: condiciones punto critico F}
	\left\{\begin{matrix}
	S_a(\vb^*\Delta\vb) = \lambda a\\
	S_b(\wb^*\Delta\wb) = \mu b
	\end{matrix}\right. 
	\ \ \text{, for  } \lambda, \mu\in \R.
	\end{equation}
\end{theorem}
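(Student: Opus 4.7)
The statement is essentially a direct consequence of the gradient formula \eqref{eq:grad} already established in the preceding theorem, together with the observation that $\Sigma_r$ and $\Sigma_s$ are spheres in real inner product spaces. The plan is to unpack what it means for the gradient to vanish on the product sphere and identify the normal directions.

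First I would recall that $\Sigma_r$ is the unit sphere in the real Hilbert space $(M_r^h(\C),\langle\cdot,\cdot\rangle)$ with inner product $\langle X,Y\rangle=\tr(XY)$ (this is a real inner product precisely because $X,Y$ are self-adjoint). Therefore the tangent space at $a\in\Sigma_r$ is the real hyperplane $\{X\in M_r^h(\C):\langle X,a\rangle=0\}$, and the normal line at $a$ is simply $\R\cdot a$. The decomposition $X=X_{tan}+\langle X,a\rangle\, a$ is the corresponding orthogonal decomposition, and an analogous picture holds for $\Sigma_s$ at $b$.

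Next I would verify that the two candidate vectors actually lie in these real inner product spaces. Since $\Delta=\Phi(\vb a^2\vb^*)-\Phi(\wb b^2\wb^*)$ is a real diagonal matrix (being the difference of diagonals of positive semidefinite matrices), $\vb^*\Delta\vb$ and $\wb^*\Delta\wb$ are self-adjoint; and for any hermitian $a,H$, the matrix $S_a(H)=aH+Ha$ is hermitian. Hence $S_a(\vb^*\Delta\vb)\in M_r^h(\C)$ and $S_b(\wb^*\Delta\wb)\in M_s^h(\C)$, which is the setting required by the gradient formula.

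Then I would invoke the gradient formula \eqref{eq:grad}: $(a,b)$ is a critical point of $F$ on the product manifold $\Sigma_r\times\Sigma_s$ if and only if
\[
S_a(\vb^*\Delta\vb)_{tan}=0 \quad\text{and}\quad S_b(\wb^*\Delta\wb)_{tan}=0.
\]
By the first paragraph, each of these conditions says precisely that the corresponding vector is a real scalar multiple of the normal direction, i.e.\ there exist $\lambda,\mu\in\R$ with $S_a(\vb^*\Delta\vb)=\lambda a$ and $S_b(\wb^*\Delta\wb)=\mu b$. (Taking traces against $a$ and $b$ respectively, and using $\tr(a^2)=\tr(b^2)=1$, one reads off $\lambda=\tr\!\big(S_a(\vb^*\Delta\vb)a\big)$ and $\mu=\tr\!\big(S_b(\wb^*\Delta\wb)b\big)$, both manifestly real.) No step is really an obstacle here; the only point worth being careful about is the realness of the multipliers, which is guaranteed by the self-adjointness verified above and by the fact that the ambient inner product is real-valued on self-adjoint matrices.
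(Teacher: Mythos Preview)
Your proof is correct and follows essentially the same approach as the paper: the paper's argument amounts to the single sentence that $(a,b)$ is critical iff $S_a(\vb^*\Delta\vb)$ is normal to $\Sigma_r$ and $S_b(\wb^*\Delta\wb)$ is normal to $\Sigma_s$, i.e.\ proportional to $a$ and $b$ respectively. You add the useful verifications that these vectors are indeed hermitian and that the resulting multipliers $\lambda,\mu$ are real, which the paper leaves implicit.
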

\subsection{Analysis of the conditions \eqref{eq: condiciones punto critico F}}
Suppose that we have operators $c\ge 0$, $u$ self-adjoint and $cu+uc=\eta c$ where $\eta\in\R$. Then the following commutation rule holds
\[
\left\{
\begin{array}{rcl}c u &= (\eta - u)c\\
u c & = c(\eta - u)
\end{array}\right..
\]
Then, $u$ commutes with $c$ and we have $uc=cu=\frac{\eta}{2}c$.
The previous comments allow us to state the next result.
\begin{theorem}\label{teo: condiciones necesarias de a y b positivos para ser punto critico de F}
	In a critical point $(a,b)$ of $F$ as in \eqref{eq: condiciones punto critico F} where $a\ge 0$ and $b\ge 0$ then
	$\vb^*\Delta\vb$ commutes with $a$ and $(\vb^*\Delta\vb)a = \frac{\lambda}{2} a$ and also $\wb^*\Delta\wb$ commutes with $b$ and   
$(\wb^*\Delta\wb)b = \frac{\mu}{2} b.
$
\end{theorem}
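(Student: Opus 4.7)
My plan is to apply, essentially verbatim, the algebraic observation that was just set up in the paragraph preceding the theorem (``Suppose we have operators $c\ge 0$, $u$ self-adjoint and $cu+uc=\eta c$\dots''), but to complete the argument that $u$ and $c$ must commute, which is the one point that was asserted without justification.

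First I would check that the hypotheses of that observation are met at the critical point. Recall $\Delta = \Phi(\vb a^2\vb^*) - \Phi(\wb b^2\wb^*)$. Since $\vb a^2\vb^*$ and $\wb b^2\wb^*$ are positive semidefinite, their diagonals are real, so $\Delta\in\mathcal{D}_n$ is a real self-adjoint diagonal matrix. Consequently $\vb^*\Delta\vb \in M_r^h(\C)$ and $\wb^*\Delta\wb \in M_s^h(\C)$ are self-adjoint. The critical point equations \eqref{eq: condiciones punto critico F} together with $a\ge 0$, $b\ge 0$, and the self-adjointness of the multipliers $\lambda,\mu\in\R$ place us in precisely the situation $cu+uc = \eta c$ with $c\ge 0$ and $u$ self-adjoint, taking $(c,u,\eta)=(a,\vb^*\Delta\vb,\lambda)$ for the first equation and $(c,u,\eta)=(b,\wb^*\Delta\wb,\mu)$ for the second.

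The key step is the following algebraic lemma: if $c\ge 0$, $u=u^*$, and $cu+uc=\eta c$ with $\eta\in\R$, then $u$ commutes with $c$ and $uc=cu=\tfrac{\eta}{2}c$. To prove it, set $u' = u - \tfrac{\eta}{2}\,I$, which is still self-adjoint; the equation becomes $cu'+u'c=0$. Multiplying on the left by $c$ yields $c^2u'+cu'c=0$, and multiplying on the right by $c$ yields $cu'c+u'c^2=0$. Subtracting these gives $[c^2,u']=0$, so $u'$ commutes with $c^2$. Since $c\ge 0$ is the unique positive square root of $c^2$, $u'$ also commutes with $c$ (via the continuous functional calculus applied to $c^2$). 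Then $cu'=u'c$ together with $cu'=-u'c$ forces $cu'=u'c=0$, i.e.\ $uc = cu = \tfrac{\eta}{2}c$.

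Applying the lemma to each of the two instances above yields exactly the claimed identities: $(\vb^*\Delta\vb)a = a(\vb^*\Delta\vb) = \tfrac{\lambda}{2}a$ and $(\wb^*\Delta\wb)b = b(\wb^*\Delta\wb) = \tfrac{\mu}{2}b$. There is no real obstacle here; the only substantive point, which the paragraph preceding the statement skipped, is the commutativity argument via squaring, and the only sanity check required is the observation that $\Delta$ is real so that the matrices appearing are self-adjoint and the lemma is applicable.
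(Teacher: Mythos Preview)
Your proposal is correct and follows the same route as the paper: both reduce to the algebraic observation that $c\ge 0$, $u=u^*$, and $cu+uc=\eta c$ force $uc=cu=\tfrac{\eta}{2}c$. The paper merely asserts the commutation step (``Then, $u$ commutes with $c$''), whereas you supply a clean justification via the identity $[c^2,u']=0$ and functional calculus; this is a genuine improvement in completeness but not a different strategy.
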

\textbf{Remark:} In these notes we are interested in the minimum value of $F$ on $\Sigma$. Since $(a,b)\in \Sigma$ implies $(|a|,|b|)\in \Sigma$, because $a^2=|a|^2$, $b^2=|b|^2$ if $a,b$ are hermitian, and $F(a,b)=F(|a|,|b|)$ it is clear that the minimum of $F$ is attained on some $(a,b)$ with $a\ge0$ and $b\ge 0$. 

\subsection{The Hessian of the map $F$}
Recall the expression of $\grad_{(a,b)}F$ obtained in \eqref{eq:grad} and the 
definition of $S_a$ and $S_b$ in \eqref{def: def Sa y Sb} for $a\in\Sigma_r$ and 
$b\in\Sigma_s$ (see \eqref{eq: definicion de Sigma sub r}).

We write $V_{tan}$ to denote the tangential part of $V\in M^h_r(\C)$ of the sphere $\Sigma_r$ when $V$ is considered as a tangent vector at a point of $\Sigma_r$ (correspondingly for $W\in M^h_s(\C)$ and $\Sigma_s$).
\begin{figure}
	\begin{center}
		\epsfig{file=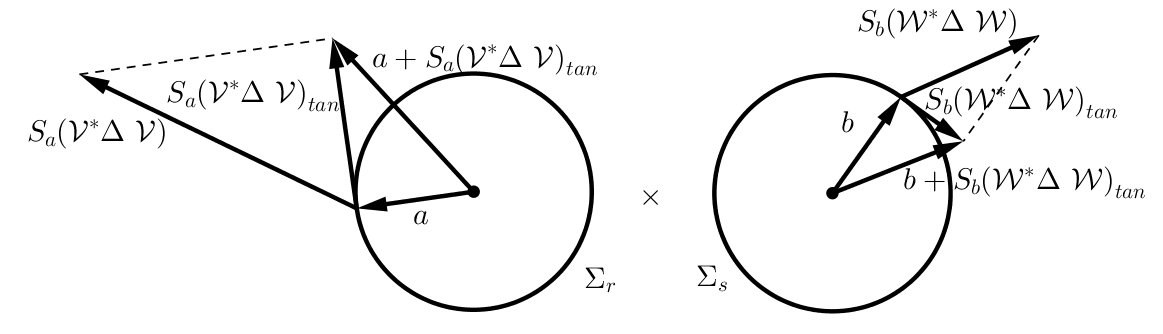,width=12cm}
	\end{center} 
	\caption{Increments on the tangents of both spheres $\Sigma_r$ and $\Sigma_s$ in the direction of the 
		gradient used in the approximation algorithm for the adequacy $\delta(V,W)$.\label{fig: tangentes esferas}}
\end{figure} 
Let us denote with  $\pi_r: M^h_r(\C)\to T(\Sigma_r)_a$ and   $\pi_s: 
M^h_s(\C)\to T(\Sigma_s)_b$
\begin{gather}
\pi_r(V) =V_{tan} =  V - \langle V,a\rangle a , \text{ for } a\in\Sigma_r , 
V\in M_r^h(\C)\\
\pi_s(W) =W_{tan} =  W - \langle W,b\rangle b , \text{ for } b\in\Sigma_s , W 
\in M_r^h(\C).
\end{gather}
Recall also that in a riemannian manifold, the Hessian of a function $U$ at a critical point is given by
\begin{equation}H(U)(Z,W) = \langle D_Z \grad U, W\rangle\label{eq:hessiano}\end{equation}
where $D$ denotes the covariant derivative of the Levi-Civita connection of the metric.
Finally recall that the covariant derivative in our case is the tangent projection of the ``ambient'' derivative.

In the computations below we will need expressions for the derivatives 
$\partial_X$ and $\partial_Y$ in the directions   $X\in T(\Sigma_r)_a$ and 
$Y\in T(\Sigma_s)_b$ respectively of the projections $\pi_r$ and $\pi_s$.

%We have that
%\begin{gather*}
%\text{hace falta ????}(\partial_X\pi_r)(V)=\partial_X (V-\langle V,a\rangle a) 
%= -\langle 
%V,X\rangle 
%a-\langle V,a\rangle X 
%\\
%(\partial_Y\pi_s)(W)=\partial_Y (W-\langle W,b\rangle b) = -\langle W,Y\rangle 
%b-\langle W,b\rangle Y
%\end{gather*}

Recall that in \eqref{eq:grad} we calculated 
$$
\grad_{(a,b)}F = 2\left(\pi_r\left( 
S_a(\vb^*\Delta \vb)\right), 
-\pi_s\left( S_b(\wb^*\Delta \wb)\right) \right)\ 
$$
 where $\Delta = \Phi(\vb a^2 \vb^*) - \Phi(\wb b^2 \wb^*).
$
In order to calculate \eqref{eq:hessiano} we can use that
% can consider
$D_{(X,Y)}=D_{(X,0)+(0,Y)} = D_{(X,0)} + D_{(0,Y)}.$

%\[D_{(X,0)}\grad_{(a,b)}F = \pi_r\times\pi_s(\partial_{(X,0)}\grad_{(a,b)}F)\]
%\[D_{(0,Y)}\grad_{(a,b)}F = \pi_r\times\pi_s(\partial_{(0,Y)}\grad_{(a,b)}F)\]

Then the covariant derivative of $\pi_r(S_a(\vb^*\Delta \vb))$ is given by
\begin{equation}
\label{eq: primera formula der covariante primer componente de F}
\begin{split}
D_{(X,Y)}  & (S_a(\vb^*\Delta \vb)- \langle S_a(\vb^*\Delta \vb) , 
a\rangle a )= \\
 = &  S_X(\vb^*\Delta \vb) + S_a(\vb^*(\partial_X\Delta+\partial_Y\Delta) \vb) 
\\ 
&-   \langle S_a(\vb^*(\partial_X\Delta+\partial_Y\Delta) \vb),a\rangle a
-  2\langle S_X(\vb^*\Delta \vb),a\rangle a \\
&-  \langle S_a\left( \vb^*\Delta \vb\right),a \rangle X 
\end{split}
\end{equation}
where we have used that $S_a$ and $S_X$ are self-adjoint and $S_a(X)=S_X(a)$. 

The covariant derivative of $\pi_s(S_b(\wb^*\Delta \wb))$ can be calculated 
similarly. 

Observe that 
\begin{equation}
\begin{split}
\label{eq: formula de derivada de Delta respecto a X}
\partial_X(\Delta)&=\partial_X\left(\Phi\left( \vb a^2\vb^*-\wb 
b^2\wb^* 
\right)\right)\\
&=\Phi(\vb (aX+Xa)\vb^*)=\Phi(\vb S_a(X)\vb^*)
\end{split}
\end{equation} 
and $
\partial_Y(\Delta)=  -\Phi(\wb S_b(Y)\wb^*)
$. Then using that $\partial_X\Delta+\partial_Y\Delta$ is diagonal
\begin{align*}
\langle S_a(\vb^*(\partial_X\Delta+\partial_Y\Delta) \vb) , X\rangle& =
\langle (\partial_X\Delta+\partial_Y\Delta) ,  \vb S_a(X)\vb^*\rangle \\
&= \langle (\partial_X\Delta+\partial_Y\Delta) ,  \Phi\left(\vb 
S_a(X)\vb^*\right)\rangle\\
& =
\langle (\partial_X\Delta+\partial_Y\Delta) ,  \partial_X\Delta\rangle 
\end{align*}
where we have used in the last equality the formula obtained in \eqref{eq: formula de derivada de Delta respecto a X} for $\partial_X\Delta$. Similarly we 
can prove that 
$\langle S_b(\wb^*(\partial_X\Delta+\partial_Y\Delta) \wb) , Y\rangle =
-\langle (\partial_X\Delta+\partial_Y\Delta) ,  \partial_Y\Delta\rangle 
$. Then
\begin{align}
\label{eq: formula con derivadas resp a X y a Y}
\langle S_a(\vb^*(\partial_X\Delta&+\partial_Y\Delta) \vb) , X\rangle -
\langle S_b(\wb^*(\partial_X\Delta+\partial_Y\Delta) \wb) , Y\rangle= 
\nonumber\\
& =
\langle (\partial_X\Delta+\partial_Y\Delta) ,  \partial_X\Delta\rangle  +
\langle (\partial_X\Delta+\partial_Y\Delta) ,  \partial_Y\Delta\rangle 
\\&=
\langle (\partial_X\Delta+\partial_Y\Delta) ,  
(\partial_X\Delta+\partial_Y\Delta)\rangle\nonumber\\
&=\|\partial_X\Delta+\partial_Y\Delta\|^2
\nonumber
\end{align}

Finally, using the expression (\ref{eq:hessiano}) 
for the quadratic form $H(F)\left((X,Y),(X,Y)\right) $ and 
\eqref{eq: primera formula der covariante primer componente de F} we obtain that
\begin{equation*}\begin{split}
%\frac{1}{2}
H(F)\left((X,Y),(X,Y)\right)&= \langle D_{(X,Y)}\left(\pi_r(S_a(\vb \Delta 
\vb^*))\right),X\rangle -
\langle D_{(X,Y)}\left(\pi_s(S_b(\wb^*\Delta \wb))\right), Y\rangle
\\
&=||\partial_X\Delta+\partial_Y\Delta||^2 + 2\left(\langle \vb^*\Delta \vb 
X,X\rangle-\langle \wb^*\Delta \wb Y,Y\rangle\right)\\
&-\langle S_a(\vb^*\Delta \vb),a\rangle ||X||^2
+\langle S_b(\wb^*\Delta \wb),b\rangle ||Y||^2
\end{split}
\end{equation*}
where we have used that $a\perp X$, $b\perp Y$, $  \langle 
S_X(\vb^*\Delta \vb),X\rangle= 2 \langle \vb^*\Delta \vb 
X,X\rangle$ and $\langle S_Y(\wb^*\Delta \wb),Y\rangle= 2 \langle \wb^*\Delta 
\wb Y,Y\rangle$.
We could simplify the expression of the Hessian even more using that $\langle 
S_a(\vb^*\Delta \vb),a\rangle=2\langle \vb^*\Delta \vb, a^2\rangle$ and  
$\langle S_b(\wb^*\Delta \wb),b\rangle=2\langle \wb^*\Delta \wb,b^2\rangle$ to 
obtain the following result
\begin{theorem}
	\label{teo: hessiano}
	The Hessian of the map $F:\Sigma_r\times\Sigma_s\to\R_{\geq 0}$ (see \eqref{def: funcion F} and \eqref{eq: definicion de Sigma sub r}) for  $X\in T(\Sigma_r)_a$ and 
	$Y\in T(\Sigma_s)_b$ at a critical point $(a,b)$ can be calculated as
	\begin{equation*}
	\begin{split} 
	%\frac{1}{2}
	H(F)\left((X,Y),(X,Y)\right) 
	=||\partial_X\Delta+\partial_Y\Delta||^2 + 2&\left(\langle \vb^*\Delta \vb ,X^2-a^2 \|X\|^2\rangle\right. \\
	&\ \ -\left.\langle \wb^*\Delta \wb  
	,Y^2-b^2\|Y\|^2\rangle\right). 
	\end{split}
	\end{equation*}
		
\end{theorem}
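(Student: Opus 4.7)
The plan is to apply the standard Riemannian identity at a critical point, $H(F)((X,Y),(X,Y)) = \langle D_{(X,Y)}\grad F, (X,Y)\rangle$, where $D$ is the Levi--Civita connection on the product $\Sigma = \Sigma_r\times\Sigma_s$. Because each factor is a round sphere in the Euclidean space of hermitian matrices, $D$ is the tangential projection of the ambient Euclidean derivative; and since $(X,Y)$ is itself tangent, any normal component of the ambient derivative of $\grad F$ drops out of the pairing automatically. I therefore only need the ambient derivative of $\grad F$ and then pair against $(X,Y)$.

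Starting from $\grad F = 2\bigl(\pi_r(S_a(\vb^*\Delta\vb)),\,-\pi_s(S_b(\wb^*\Delta\wb))\bigr)$, I split $D_{(X,Y)} = \partial_X + \partial_Y$ by linearity and apply the Leibniz rule to each factor. The first block produces exactly the identity \eqref{eq: primera formula der covariante primer componente de F} (using $S_a(X)=S_X(a)$ and the product rule on $\Delta = \Phi(\vb a^2\vb^*) - \Phi(\wb b^2 \wb^*)$); an analogous expression holds for the second block with $b, Y, \wb$ and opposite sign. When I now pair with $(X,Y)$, every term proportional to $a$ vanishes against $X$ since $X\in T(\Sigma_r)_a$ is orthogonal to $a$, and symmetrically those proportional to $b$ vanish against $Y$. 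This eliminates four of the six terms produced by the product rule.

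The surviving terms are of three types. First, the ``mixed'' contribution $\langle S_a(\vb^*(\partial_X\Delta+\partial_Y\Delta)\vb), X\rangle - \langle S_b(\wb^*(\partial_X\Delta+\partial_Y\Delta)\wb), Y\rangle$ collapses to $\|\partial_X\Delta+\partial_Y\Delta\|^2$ by exactly the computation in \eqref{eq: formula con derivadas resp a X y a Y}, using self-adjointness of $S_a, S_b$, the orthogonality of the range of $\Phi$ with the off-diagonal part, and the explicit formulas $\partial_X\Delta = \Phi(\vb S_a(X)\vb^*)$, $\partial_Y\Delta = -\Phi(\wb S_b(Y)\wb^*)$. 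Second, the diagonal terms $\langle S_X(\vb^*\Delta\vb), X\rangle$ and $-\langle S_Y(\wb^*\Delta\wb), Y\rangle$ reduce to $2\langle \vb^*\Delta\vb, X^2\rangle$ and $-2\langle \wb^*\Delta\wb, Y^2\rangle$ through the identity $\langle S_X(U), X\rangle = \langle U, S_X(X)\rangle = 2\langle U, X^2\rangle$. Third, the norm-correction terms reduce analogously to $-2\langle \vb^*\Delta\vb, a^2\rangle\|X\|^2$ and $+2\langle\wb^*\Delta\wb, b^2\rangle\|Y\|^2$ via $\langle S_a(U), a\rangle = 2\langle U, a^2\rangle$.

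Adding the three pieces and grouping $2\langle\vb^*\Delta\vb, X^2\rangle - 2\langle\vb^*\Delta\vb, a^2\rangle\|X\|^2 = 2\langle\vb^*\Delta\vb, X^2 - a^2\|X\|^2\rangle$ (with the analogous regrouping for $\wb$) yields the stated formula. The main obstacle is clerical rather than conceptual: one must track roughly half a dozen terms coming from the product rule, verify carefully which ones die because of $X\perp a$ or $Y\perp b$, and recognise the surviving cross term as a perfect square — the key algebraic input being the identity \eqref{eq: formula con derivadas resp a X y a Y} already established while computing the gradient.
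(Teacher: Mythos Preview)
Your proposal is correct and follows essentially the same route as the paper: apply the Riemannian Hessian identity at a critical point, differentiate the tangentially-projected gradient in the ambient Euclidean space, pair against $(X,Y)$ so that all terms proportional to $a$ or $b$ vanish, collapse the cross term via \eqref{eq: formula con derivadas resp a X y a Y}, and simplify the remaining pieces using $\langle S_X(U),X\rangle=2\langle U,X^2\rangle$ and $\langle S_a(U),a\rangle=2\langle U,a^2\rangle$. Your observation that the normal component of the ambient derivative can be ignored from the outset (since it is paired against a tangent vector) is a small streamlining, but the argument is otherwise identical to the paper's.
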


\section{A geometric interpretation of the adequacy}\label{sec: interpret geom de adecuacion}
Let $V$ and $W$ be two orthogonal subspaces of $\C^n$ with dim$(V)=r$ and dim$(W)=s$ as before.

In this section we distinguish three subalgebras of $M_n(\C)$.
\begin{enumerate}
\item  $\mathcal{D}_n\subset M_n(\C)$ the subalgebra of diagonal matrices and
$
\Phi:M_n(\C)\to \mathcal{D}_n
$
the conditional expectation that associates to the matrix $m$ its diagonal part $\Phi(m)$ as before. Observe that $\Phi$ is an orthogonal projection for the natural Hilbert structure of $M_n(\C)$. We have for $m\in M_n(\C)$
\begin{equation}
\label{def: def de Phi esp cond a diag}
\Phi(m) = \sum_{k=1}^n p_kmp_k
\end{equation}
where $p_k$ is the orthogonal projection onto the $k$-axis of $\C^n$.
\item
% REVISAR SI NO ES MEJOR definir $A_V=M_n(V) $ de la forma:
%$$
%x= \begin{pmatrix} a & 0\\ 0&0\end{pmatrix} 
%$$
%
We denote with $M_n(V)\subset M_n(\C)$ the subalgebra of the endomorphisms $x$ 
of $\C^n$ which commute with $P_V=\vb\vb^*$ (for $\vb:\C^r\to V\subset\C^n$ an 
isometry with range $V$) and verify $P_Vx=x$. Observe that $M_n(V)$ is a 
C$^*$-subalgebra of $M_n(\C)$ with identity $P_V$.

Also the map $\pb_V:M_n(\C)\to M_n(\C)$ defined by
\[
\pb_V(m) = P_VmP_V
\]
satisfies the requirements of a conditional expectation in $M_n(\C)$ with image 
$M_n(V)$, except for the fact that $\pb_V(I)=P_V\neq I$.
Finally
\[
\mathcal I_V:M_r(\C)\to M_n(V)\subset M_n(\C)\ \ ,\ \  \mathcal I_V(a)=\vb a\vb^*
\]
defines an isomorphism of C$^*$-algebras between $M_r(\C)$ and $M_n(V)$.
\item Similarly we denote $M_n(W)$, $P_W$, $\pb_W$ and $\mathcal I_W$ 
related to the subspace $W$. Notice that $M_n(V)$ and $M_n(W)$ are orthogonal 
in $M_n(\C)$ for the Hilbert space structure and also in the sense that
\[
ab = ba = 0 \text{ for } a\in M_n(V) \text{ and } b\in M_n(W).
\]
\end{enumerate}

Now we analyze the optimization problem of computing the adequacy of $(V,W)$ in this context.

We denote with $M^h_n(V)$ the self-adjoint part of $M_n(V)$. The function 
$a\mapsto \vb a^2 \vb^*$ maps bijectively the positive part $\Sigma_r^+$ of the 
unit sphere $\Sigma_r= \{a\in M_r^h(\C):    \tr (a^2)=1\}$ (see 
\eqref{eq: definicion de Sigma sub r}) onto the set
\begin{equation}\label{def: def de sigmaV}
\sigma_V = \{c\in M^h_n(V):c\geq 0\text{ and }\tr c=1\}.
\end{equation}
Note that if $c\in\sigma_V$, then $\vb^*c^{1/2}\vb$ lies in $\Sigma_r^+$, where $c^{1/2}$ is the positive square root of the operator $c$. Similar considerations apply to $W$ and we can define the corresponding $\sigma_W$.

Recall that the minimum of the function $F$ (the adequacy of the pair $(V,W)$, see \eqref{eq: adequacy y funcion F}) is attained, among other points, at some $(a,b)\in\Sigma_r\times\Sigma_s$ where $a\ge0$ and $b\ge0$. Therefore the adequacy can be obtained as the square of the distance of the set $\Phi(\sigma_V)$ to the set $\Phi(\sigma_W)$
\begin{equation}
\label{eq: adecuacion es cuadrado de dist sigmaV a sigmaW}
\delta(V,W)=\left(\text{dist}\left(\Phi(\sigma_V), \Phi(\sigma_W)\right)\right)^2
\end{equation}

Now we describe the set $\Phi(\sigma_V)$ (and similarly $\Phi(\sigma_W)$). Clearly $\sigma_V$ is a convex compact set in $M^h_n(V)$ and therefore $\sigma_V$ is the convex hull of the set $\sigma_V^e$ of its extremal points. Since $\mathcal I_V:M_r(\C)\to M_n(V)\subset M_n(V)$ is an isomorphism of C$^*$-algebras, the set $\sigma_V^e$ consists of the projections $p$ of rank one in $M_n(V)$. Now these projections $p$ are obtained as follows
\[p=uu^*, \quad \text{ with } u  \ \text{ a unit vector in $V$.}
\]
In this case the diagonal of $p$ coincides with
$\Phi(p) = \text{diag}
\left( |u_1|^2, \dots , |u_n|^2\right).
$

Let us denote by $\Sigma_V\subset \C^n$ the unit sphere of $V$ and correspondingly by $\Sigma_W\subset \C^n$ the unit sphere of $W$. 

Also define $m:\C^n\to\R^n$ by  
\begin{equation}\label{def: funcion m momento de vector complejo }
m(v) \simeq \Phi(v v^*)
%=u\circ \overline u= \left(\begin{smallmatrix}
%|u_1|^2\\ \vdots \\ |u_n|^2
%\end{smallmatrix}\right).
\end{equation}
where we identified the diagonal $\Phi(v v^*)$ with the vector $((vv^*)_{1,1},\dots,(vv^*)_{n,n})\in \R^n_{\geq 0}$.
%With this notation, and the identification of the matrix $\Phi( c)$ with the vector formed with its diagonal entries $c_{i,i}$,
% $\left(\begin{smallmatrix}c_{1,1} \\ \vdots\\ c_{n,n}\end{smallmatrix}\right)$ which is the moment of the system formed by the columns of $c$ (see Definition \ref{def: momento del sistema})
Then we can state the next result.
\begin{theorem}\label{teo: igualdad diagonal de tr 1 posit en V es igual a la caps conv esfera de V}
 If $m$ is as in \eqref{def: funcion m momento de vector complejo }, $\Sigma_V$ is the unit sphere of the subspace $V$ and $\mathop{co}(m(\Sigma_V))$ is the convex hull of the set $m(\Sigma_V)$, then
$$
\Phi(\sigma_V)=\mathop{co}(m(\Sigma_V))
$$
for $\Phi$ defined in \eqref{def: def de Phi esp cond a diag} and $\sigma_V$ in \eqref{def: def de sigmaV}.
\end{theorem}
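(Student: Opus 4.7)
The plan is to establish the two inclusions separately, using in each direction the linearity of $\Phi$ combined with the characterization of extreme points of $\sigma_V$ already observed in the excerpt.

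For the inclusion $\mathop{co}(m(\Sigma_V)) \subseteq \Phi(\sigma_V)$, I would start from a finite convex combination $\sum_{i} \lambda_i\, m(v_i)$ with $v_i \in \Sigma_V$, $\lambda_i \ge 0$, $\sum_i \lambda_i = 1$. Each rank-one operator $v_i v_i^*$ lies in $M_n(V)$ (since $v_i \in V$ implies $P_V v_i v_i^* P_V = v_i v_i^*$), is positive, and has trace $\|v_i\|^2 = 1$. Because $\sigma_V$ is convex, $c := \sum_i \lambda_i\, v_i v_i^* \in \sigma_V$, and by linearity $\Phi(c) = \sum_i \lambda_i\, \Phi(v_i v_i^*) = \sum_i \lambda_i\, m(v_i)$.

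For the reverse inclusion $\Phi(\sigma_V) \subseteq \mathop{co}(m(\Sigma_V))$, given $c\in\sigma_V$ I would invoke the spectral theorem for $c$ viewed as a positive self-adjoint operator with range in $V$. Since $c$ commutes with $P_V$ and $P_V c = c$, its nonzero eigenvectors can be chosen orthonormal in $V$, giving $c = \sum_{i=1}^{k} \lambda_i\, u_i u_i^*$ with $u_i\in \Sigma_V$, $\lambda_i\ge 0$; the normalization $\tr(c) = 1$ yields $\sum_i \lambda_i = 1$. Applying $\Phi$ and using linearity, $\Phi(c) = \sum_i \lambda_i\, m(u_i) \in \mathop{co}(m(\Sigma_V))$.

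Equivalently, and more conceptually, one may simply combine the two observations already collected: that $\sigma_V$ is convex and compact in $M_n^h(V)$ with extreme set $\sigma_V^e = \{uu^* : u\in\Sigma_V\}$ (via the C${}^*$-isomorphism $\mathcal I_V$ transporting pure states of $M_r(\C)$ to rank-one projections in $M_n(V)$), and that the linear map $\Phi$ commutes with the convex hull operation, i.e.\ $\Phi(\mathop{co}(A))=\mathop{co}(\Phi(A))$. Then Krein--Milman (trivial in finite dimension, since the convex hull of a compact set in a finite dimensional space is already closed) gives $\sigma_V = \mathop{co}(\sigma_V^e)$, and applying $\Phi$ yields the claimed identity.

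I do not expect a genuine obstacle: the only point that requires a line of verification is that the spectral decomposition of $c\in\sigma_V$ uses eigenvectors lying in $V$, which is immediate from $c\in M_n(V)$ and $P_V c = c P_V = c$. Everything else is linearity and the straightforward parametrization of extreme points by unit vectors in $V$.
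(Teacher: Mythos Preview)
Your proposal is correct. Your second, ``conceptual'' paragraph is essentially the paper's argument: the paper observes that $\Phi(\sigma_V)$ is convex and compact, hence equals the convex hull of its extreme points, and uses the standard fact that extreme points of a linear image lie in the image of the extreme set to get $\Phi(\sigma_V)\subset\mathop{co}(\Phi(\sigma_V^e))=\mathop{co}(m(\Sigma_V))$; the reverse inclusion is handled, as you do, by noting $m(\Sigma_V)\subset\Phi(\sigma_V)$ and taking convex hulls.

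Your primary route for the inclusion $\Phi(\sigma_V)\subset\mathop{co}(m(\Sigma_V))$ via the spectral decomposition of $c\in\sigma_V$ is a mild variant: instead of invoking the abstract extremal-point mapping principle, you exhibit directly, for each $c$, an explicit convex combination of rank-one projections $u_iu_i^*$ with $u_i\in\Sigma_V$. This is arguably more elementary (no Krein--Milman or extremal-set lemma needed, just the spectral theorem for a single hermitian matrix), and it makes transparent the one point you flag---that the eigenvectors can be taken in $V$---which follows immediately from $P_Vc=cP_V=c$. Both arguments are short and standard; yours trades a general convexity fact for a concrete spectral computation.
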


\begin{proof} Since $\Phi$ is linear, $\Phi(\sigma_V)$ is a convex compact set in $\R^n$. Therefore, $\Phi(\sigma_V)$ is the convex hull of its extremal set. But it is well known that the extremal set of $\Phi(\sigma_V)$ is contained in the image $\Phi(\sigma_V^e)$ which is $m(\Sigma_V)$. Therefore $\Phi(\sigma_V)$ is included in the convex hull of $m(\Sigma_V)$.

The inclusion $m(\Sigma_V)\subset \Phi(\sigma_V)$ implies that $co(m(\Sigma_V)) \subset \Phi(\sigma_V)$ which proves the equality.
\end{proof}
\begin{remark}
	Note that in general the set of extremal points of $\Phi(\sigma_V)$ is strictly included in $\Phi(\sigma_V^e)=m(\Sigma_V)$.
\end{remark}

\begin{remark}  If $S^{2r-1}$ denotes the unit sphere in $\C^r$ then, since $\Sigma_V =\vb S^{2r-1}\vb^*$, we can replace $m(\Sigma_V)$ with $m(\vb S^{2r-1}\vb^*)$ in the previous theorem.
\end{remark}

%%%%%%%%%%%%%%%%%%%%%%%%%%%%%%%%%%%%%%%%%%%%%%%%%%%%%%%%%%%%%%%%%%%%%%%%%%%%%%%

\section{On the critical points of the function $F$}\label{sec: On the critical points of the function F}
The results of the previous section motivates the study of minimum values of $F:\Sigma=\Sigma_r\times\Sigma_s\to\R_{\geq 0}$ (see \eqref{def: funcion F}) attained at extremal points of the sets $\Phi(\sigma_V)$ and $\Phi(\sigma_W)$. In this section we describe critical points of $F$ under the assumption that they are attained on pairs of one dimensional projections. This would always be the case if the sets $\Phi(\sigma_V)$ and $\Phi(\sigma_W)$ were strictly convex as seen in all the examples we examined where none of the vectors of the standard basis belong to either subspace.

We assume the following:
\begin{enumerate}
\item $(a,b)\in \Sigma$ is a critical point for $F$
\item $a$ and $b$ are one dimensional projections in $\C^r$ and $\C^s$ respectively. 
\item We choose $\tilde a\in\C^r$ and  $\tilde b\in\C^s$ unit vectors such that
\begin{equation*}
a(x) = \langle x,\tilde a\rangle\tilde a\ ,\quad x\in\C^r \text{ , and }
b(y) = \langle y,\tilde b\rangle\tilde b\ ,\quad y\in\C^s.
\end{equation*}
\item We denote with
\begin{equation*}
x_k = \vb^* e_k \text{ , and }
y_k = \wb^* e_k,\text{  for  } k=1,\ldots, n,
\end{equation*}
 where $e_k$ are the standard base vectors and $\vb$, $\wb$ some fixed isometries as in \eqref{eq: isometrias V y W}.
\item We denote $\alpha_k = \langle\tilde a,x_k\rangle$ and $\beta_k = \langle\tilde b,y_k\rangle$. Then, using that $\vb\tilde a=\sum \langle \vb \tilde a , e_k\rangle e_k$, and that therefore $\tilde a=\vb^*\vb \tilde a =  \sum \langle \vb \tilde a , e_k\rangle\vb^* e_k $ we can conclude that $\tilde a=\sum\alpha_k x_k$. Similarly $\tilde b=\sum\beta_k y_k$ can be obtained.

\item Since $a, b\geq 0$, after some computations follows that the pair $(a,b)$ is a critical point for the function $F$ (see Theorem \ref{teo: condiciones necesarias de a y b positivos para ser punto critico de F}) if and only if
\begin{equation} \label{eq: condicion alfa beta lambda equis mu ygriega}
\sum_{k=1}^n(|\alpha_k|^2 - |\beta_k|^2 - \lambda/2)\alpha_k x_k = 0 %\ \ \text{ and }\ \ 
= \sum_{k=1}^n(|\alpha_k|^2 - |\beta_k|^2 - \mu/2)\beta_k y_k.
\end{equation}
\end{enumerate}
Observe that $\sum |\alpha_k|^2=1$. In fact $\alpha_k = \langle\vb \tilde a,e_k\rangle$ and since $\vb$ is an isometry, $||\vb \tilde a||=1$. Similarly $\sum|\beta_k|^2=1$.

Now we turn the analysis of equations \eqref{eq: condicion alfa beta lambda equis mu ygriega}. First notice that there exist non trivial complex combinations of the form
\[\sum_{k=1}^n\xi_k x_k = 0 \text{ , and } \sum_{k=1}^n \eta_ky_k=0\]
because $r = \dim V<n$ and $s = \dim W<n$.

For each of such pairs, $\xi_1,\eta_1;\dots ;\xi_n,\eta_n$ consider the system
\begin{equation}\label{eq: condicion alfa beta lambda xi mu eta}
\alpha_k|\alpha_k|^2 - (\lambda/2+|\beta_k|^2)\alpha_k - \xi_k=0, \text{ and } 
\beta_k| \beta_k|^2 + (\mu/2-|\alpha_k|^2)\beta_k + \eta_k=0  
\end{equation}
obtained from \eqref{eq: condicion alfa beta lambda equis mu ygriega} identifying each coefficient with the corresponding $\xi_k$ and $\eta_k$.

Next we multiply the first equation of \eqref{eq: condicion alfa beta lambda xi mu eta} by $\varphi_k$ and the second by $\psi_k$ (with $|\varphi_k|=1$ and $|\psi_k|=1$) so that each $\sigma_k = \varphi_k\xi_k$ is real and $\tau_k = \psi_k\eta_k$ is real. Defining $s_k = \varphi_k\alpha_k$ and $t_k = \psi_k\beta_k$ we get from equations \eqref{eq: condicion alfa beta lambda xi mu eta}
\begin{equation}\label{eq: condiciones lambda te ese sigma tau}
s_k^3 - (\lambda/2+t_k^2)s_k - \sigma_k=0, \text{ and }
t_k^3 + (\mu/2-s_k^2)t_k + \tau_k=0 
\end{equation}
and all the coefficients of these equations are real numbers.

In fact multiplying the first equation in \eqref{eq: condicion alfa beta lambda equis mu ygriega} by $\tilde a$ and the second by $\tilde b$ we get
\begin{equation}\label{eq: ecuaciones alfa beta lambda mu}
\sum_{k=1}^n(|\alpha_k|^2 - |\beta_k|^2)|\alpha_k|^2 = \lambda/2 \text{ , and } 
\sum_{k=1}^n(|\alpha_k|^2 - |\beta_k|^2)|\beta_k|^2 = \mu/2
\end{equation}
which shows that $\lambda$ and $\mu$ are real and moreover $\lambda\ge\mu$ because
\[\frac{\lambda-\mu}{2} = \sum (|\alpha_k|^2-|\beta_k|^2 )^2.\]

In terms of $s_k$ and $t_k$ equations \eqref {eq: ecuaciones alfa beta lambda mu} can be rewritten in the form
\begin{equation}\label{eq: ecuaciones ese te lambda mu}
\sum_{k=1}^n(s_k^2 - t_k^2)s_k^2 = \lambda/2 \text{ , and } 
\sum_{k=1}^n(s_k^2 - t_k^2)t_k^2 = \mu/2.
\end{equation}
The set of equations \eqref{eq: condiciones lambda te ese sigma tau} and \eqref{eq: ecuaciones ese te lambda mu} form a complete system of 2n+2 equations with 2n+2 unknowns.
% 
% For each $k$ in (10.5) we have
% \[t^2_k=\frac{2 s_k^3-\lambda  s_k-2 \sigma_k}{2 s_k}\]
% Substitution in (10.6) gives
% %
% \[t_k = -\frac{2 s_k \tau_k }{\lambda (-s_k)+\mu  s_k-2 \sigma_k }\]
% Substituting back in (10.5) gives
% %
% \begin{gather*}
% 2 s_k^5 (\lambda -\mu )^2+8 \sigma_k
%     s_k^4 (\lambda -\mu )-s_k^3
%    \left(\lambda ^3-2 \lambda ^2
%    \mu +\lambda  \mu ^2-8 \sigma_k
%    ^2+8 \tau_k ^2\right)\\-2 \sigma_k  s_k^2 \left(3
%    \lambda ^2-4 \lambda  \mu
%    +\mu ^2\right)+4 \sigma_k^2 s_k
%    (2 \mu -3 \lambda )-8 \sigma_k^3 = 0
%    \end{gather*}
%

\section{Characterization of critical points of $F$}\label{sec: Characterization of critical points of F}

%The previous discussion proved 
Based on the discussion of the preceding paragraphs we state the following theorem.

\begin{theorem}
Let $\mathcal V: \C^r\to\C^n$ and $\mathcal W: \C^s\to\C^n$ be fixed isometries such that R$(\mathcal V)=V\perp$ R$(\mathcal W)=W$,  $\{e_k\}_{k=1}^n$ be the standard basis of $\C^n$ and $a, b$ be unidimensional projections in $\C^r$ and $\C^s$ respectively. Then the following statements are equivalent,
\begin{enumerate}
 \item[i) ] the pair $(a,b)$ is a critical point of the map $F$ (defined in \eqref{def: funcion F}),
\item [ii) ] there exists a pair of unitary vectors $(\tilde a, \tilde b)\in \C^r\times \C^s$ such that $a= \langle \cdot , \tilde a\rangle \tilde a$ and $b= \langle \cdot , \tilde b\rangle \tilde b$, and $(\tilde a,\tilde b)$ satisfy equations \eqref{eq: condicion alfa beta lambda equis mu ygriega} for 
$\alpha_k =\langle \tilde a, \mathcal{V}^* e_k\rangle$ and $\beta_k =\langle \tilde b, \mathcal{W}^* e_k\rangle$ for $\kn$,
and  

\item[iii) ] there exists a pair of unitary vectors $(\tilde a, \tilde b)\in \C^r\times \C^s$ such that
$\tilde a= \sum_{k=1}^n  \overline\varphi_k s_k \mathcal{V}^*e_k$ and $\tilde b= \sum_{k=1}^n  \overline\psi_k t_k \mathcal{W}^*e_k$, where 
\begin{enumerate}
\item $s_k, t_k\in\R$, for $\kn$ and $\sum_{k=1}^n  s_k^2=1=\sum_{k=1}^n  t_k^2$,
\item $\varphi_k, \psi_k\in\C$, $|\varphi_k|=|\psi_k|=1$, for $\kn$
\item $\|\sum_{k=1}^n  \overline\varphi_k s_k \mathcal{V}^*e_k\|=1=\|\sum_{k=1}^n  \overline\psi_k \ t_k\mathcal{W}^*e_k\|$

\item there exists  $\sigma_k, \tau_k\in\R_{\geq 0}$ such that 
$\displaystyle{ 
	 \left\{\begin{array}{l}
	 \sum_{k=1}^n \overline{\varphi}_k \sigma_k \mathcal{V}^* e_k = 0\\
	 \sum_{k=1}^n \overline{\psi}_k \tau_k \mathcal{W}^* e_k=0,
	 \end{array}\right.}$
\item 
and $s_k, t_k \in \R$, for $\kn$ are solutions of the systems
\[
 \left\{\begin{array}{rcl}
s_k^3- \Big(\big(\sum_{j=1}^n (s_j^2-t_j^2)s_j^2\big)+t_k^2 \Big)s_k+\sigma_k&=&0\\
t_k^3+\Big(\big(\sum_{j=1}^n (t_j^2-s_j^2)t_j^2\big)-s_k^2\Big)t_k+\tau_k&=&0\\
        \end{array}
        \right.
 \]
\end{enumerate}
\end{enumerate}
\end{theorem}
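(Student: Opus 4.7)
The plan is to establish (i)$\Leftrightarrow$(ii) and then (ii)$\Leftrightarrow$(iii) separately. The first equivalence unpacks the abstract operator condition of Theorem \ref{teo: condiciones necesarias de a y b positivos para ser punto critico de F} in the special case of rank-one projections; the second is a reformulation obtained by extracting phases from the complex scalars $\alpha_k, \beta_k$.

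For (i)$\Leftrightarrow$(ii): since $a=\tilde a\tilde a^*$ and $b=\tilde b\tilde b^*$ are rank-one projections, $a^2=a$ and $b^2=b$, so Theorem \ref{teo: condiciones necesarias de a y b positivos para ser punto critico de F} identifies criticality with the eigenvalue pair
\[
\vb^*\Delta\vb\,\tilde a=(\lambda/2)\,\tilde a,\qquad \wb^*\Delta\wb\,\tilde b=(\mu/2)\,\tilde b
\]
for some $\lambda,\mu\in\R$. A short computation gives $\Delta_{k,k}=|\alpha_k|^2-|\beta_k|^2$ and thus $\vb^*\Delta\vb=\sum_k(|\alpha_k|^2-|\beta_k|^2)x_kx_k^*$. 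Combined with $x_k^*\tilde a=\alpha_k$ and $\tilde a=\vb^*\vb\tilde a=\sum_k\alpha_k x_k$, the first eigenvalue equation becomes precisely the first equation of \eqref{eq: condicion alfa beta lambda equis mu ygriega}; the second is analogous, and the argument is reversible.

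For (ii)$\Leftrightarrow$(iii): I would choose unit complex numbers $\varphi_k,\psi_k$ so that $s_k:=\varphi_k\alpha_k$ and $t_k:=\psi_k\beta_k$ are real, with sign selected so that $\sigma_k,\tau_k\geq 0$ below. Then $\tilde a=\sum_k\overline\varphi_k s_k\vb^*e_k$, $\tilde b=\sum_k\overline\psi_k t_k\wb^*e_k$, and $\sum_k s_k^2=\sum_k|\alpha_k|^2=\|\vb\tilde a\|^2=1$, providing (iii)(a)--(c). Multiplying the $k$-th coefficient appearing in \eqref{eq: condicion alfa beta lambda equis mu ygriega} by $\varphi_k$ (respectively $\psi_k$) produces real scalars $\sigma_k$ (respectively $\tau_k$), and those equations translate into the orthogonality relations (iii)(d). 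Substituting the expressions $\lambda/2=\sum_j(s_j^2-t_j^2)s_j^2$ and $\mu/2=\sum_j(s_j^2-t_j^2)t_j^2$ from \eqref{eq: ecuaciones alfa beta lambda mu} into the defining relations for $\sigma_k,\tau_k$ then yields the cubic system (iii)(e). Running the same substitutions in reverse gives (iii)$\Rightarrow$(ii).

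The main bookkeeping obstacle is ensuring $\sigma_k,\tau_k\geq 0$ simultaneously with the normalization and eigenvector identities already secured. This is handled by the residual $\mathbb Z_2$ freedom $\varphi_k\mapsto-\varphi_k$: the flip changes the sign of $s_k$ but preserves $s_k^2$, $t_k^2$ and the form of the cubic equations (up to the sign of $\sigma_k$), so the phase of each $\varphi_k$ and $\psi_k$ can be chosen componentwise to make $\sigma_k,\tau_k\geq 0$ without disturbing (iii)(a)--(c) or (iii)(e).
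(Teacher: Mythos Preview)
Your argument is correct and follows essentially the same route as the paper: the equivalence (i)$\Leftrightarrow$(ii) is exactly the content of items (1)--(6) in Section~\ref{sec: On the critical points of the function F}, and (ii)$\Leftrightarrow$(iii) is obtained there by the same phase-extraction $s_k=\varphi_k\alpha_k$, $t_k=\psi_k\beta_k$ together with the identities \eqref{eq: ecuaciones alfa beta lambda mu}. Your explicit observation that the residual $\mathbb Z_2$ freedom $\varphi_k\mapsto-\varphi_k$ (and $\psi_k\mapsto-\psi_k$) can be used to force $\sigma_k,\tau_k\ge 0$ without disturbing the cubic system is a detail the paper leaves implicit.
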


\begin{proof}
The equivalence i) $\Leftrightarrow$ ii) has been discussed in the previous section.

ii) $\Rightarrow$ iii) has also been proved at the end of the previous section.

Let us consider the implication iii) $\Rightarrow$ ii).

If we define $\lambda$ and $\mu$ with $
 \left\{\begin{array}{rcl}
\lambda/2&=&\sum_{k=1}^n (s_k^2-t_k^2)s_k^2\\
\mu/2&=&\sum_{k=1}^n (t_k^2-s_k^2)t_k^2\\
        \end{array}
        \right.
 $
 then $\lambda$, $\mu$, $s_k, t_k$ (for $\kn$) satisfy \eqref{eq: ecuaciones ese te lambda mu}. 
Moreover, iii) (e) implies that they also satisfy \eqref{eq: condiciones lambda te ese sigma tau}.

 Let us now define $\alpha_k=\overline{\varphi}_k s_k$, $\beta_k=\overline{\psi}_k t_k$, for $\kn$, and observe that the equations iii) (c) $\|\sum_{k=1}^n  \overline\varphi_k s_k \mathcal{V}^*e_k\|=1=\|\sum_{k=1}^n  \overline\psi_k \ t_k\mathcal{W}^*e_k\|$ are equivalent to $\sum_{k=1}^n  \overline\varphi_k s_k  e_k= \sum_{k=1}^n  \alpha_k   e_k\in V$ and $\sum_{k=1}^n  \overline\psi_k t_k  e_k=\sum_{k=1}^n  \beta_k  e_k\in W$. Then if we define 
 $$
 \begin{array}{l}
 \tilde a= \sum_{k=1}^n  \overline\varphi_k s_k \mathcal{V}^*e_k =\sum_{k=1}^n  \alpha_k \mathcal{V}^*e_k\\
 \tilde b= \sum_{k=1}^n  \overline\psi_k t_k \mathcal{W}^*e_k= \sum_{k=1}^n  \beta_k \mathcal{W}^*e_k
 \end{array}
 $$
 follows that
 $$
 \begin{array}{l}
 \vb\tilde a  =\sum_{k=1}^n  \alpha_k  e_k\ \  \Rightarrow \ \ \alpha_k =\langle \tilde a, \mathcal{V}^* e_k\rangle\\
 \wb\tilde b = \sum_{k=1}^n  \beta_k  e_k\ \  \Rightarrow\ \  \beta_k =\langle \tilde b, \mathcal{W}^* e_k\rangle
 \end{array}
 $$
 (since $  \sum_{k=1}^n  \alpha_k   e_k\in V$ and $ \sum_{k=1}^n  \beta_k  e_k\in W$).
 
 Now if we define $\xi_k=\overline{\varphi}_k \sigma_k$ and  $\eta_k=\overline{\psi}_k \tau_k$, for $\kn$,  then iii)(e) implies that equations \eqref{eq: condicion alfa beta lambda xi mu eta} are satisfied  and therefore equations \eqref{eq: condicion alfa beta lambda equis mu ygriega} are also satisfied with $\alpha_k =\langle \tilde a, \mathcal{V}^* e_k\rangle$ and $\beta_k =\langle \tilde b, \mathcal{W}^* e_k\rangle$. Then statement ii) holds.
 \end{proof}

\section{Supports that have neighborhoods of $\srs$ in $\mathcal{F}_{(r,s)}$}\label{sec: soportes con entornos de soportes}

Recall that with $\mathcal{F}_{(r,s)}$ we denote the set of pairs $(V,W)$ of orthogonal subspaces $V$ and $W$ of $\C^n$ such that $\dim(V)=r$ and $\dim(W)=s$. See Section \ref{sec: preliminares} for its relation with flag manifolds.

In this section we study the existence of supports $(V,W)\in\srs$ that belong to an open neighborhood of $\mathcal{F}_{(r,s)}$ formed entirely of supports in $\srs$.
\begin{remark}
	Note that in general, a support $(V,W)$ of $\C^n$ in the flag 
	$\mathcal{F}_{(r,s)}$, is not necessarily an interior point of 
	$\mathcal{F}_{(r,s)}$. Consider for example two orthogonal one dimensional 
	subspaces $V=\text{gen}\{v\}$ and $W=\text{gen}\{w\}$ that form a support 
	in $\C^n$ ($n\geq 3$). 
	Then their generators must satisfy $|v_i|=|w_i|$ for $i=1,\dots, n$. 
	Suppose that $v_1\neq 0$, and $ v_1=\rho e^{i\theta}$, $w_1=\rho 
	e^{i\beta}$ with $\rho=|v_1|=|w_1|$ and $\theta,\beta \in[0,2\pi)$,
	Then for $\varepsilon>0$ consider small perturbations $v_\varepsilon$ and 
	$w_\varepsilon$ with their first coordinates 
	$(v_\varepsilon)_1 =\rho/(1+\varepsilon)e^{i\theta}$ and 
	$(w_\varepsilon)_1 =\rho(1+\varepsilon)e^{i\beta}$ and the rest equal to 
	those of $v$ and $w$. Then $\langle v_\varepsilon,w_\varepsilon\rangle = 
	\langle v,w\rangle  =0$ but $|(v_\varepsilon)_1 |=\rho/(1+\varepsilon)\neq  
	\rho (1+\varepsilon)=|(w_\varepsilon)_1 |$ for $\varepsilon>0$. If we 
	denote 
	with $V_\varepsilon$ and $W_\varepsilon$ the subspaces generated by  
	$v_\varepsilon$ and $ w_\varepsilon$ respectively, the previous 
	calculations prove that there exist pair of subspaces $(V_\varepsilon, 
	W_\varepsilon)$ in the flag $\mathcal{F}_{(1,1)}$ that do not form a 
	support and that they can be chosen as close to $(V,W)$ as desired (taking 
	$\varepsilon\to 0$). Therefore $(V,W)$ is not an 
	interior point of 
	$\mathcal{F}_{(1,1)}$.
\end{remark}
\begin{theorem}\label{teo: los soportes forman un abierto}
	Let $n\in\N$, $n\geq 3$. Then, there exists a support $(V_n,W_n)$  in $\C^n$ 
	that is an interior point of the flag 
	$\mathcal{F}_{(r,s)}$ for certain $r$, $s$ $<n$.
\end{theorem}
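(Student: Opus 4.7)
The plan is to take $r = n-1$ and $s = 1$ (both strictly less than $n$ for $n \geq 2$) and to exhibit an explicit support $(V_n, W_n)\in\mathcal{S}_{(n-1,1)}$ whose common moment point lies in the relative interior of $\Phi(\sigma_{V_n})$ inside the probability simplex $\Delta^{n-1}\subset\R^n$. Together with the Hausdorff-continuity of $V\mapsto\Phi(\sigma_V)$ this will force a full neighbourhood of $(V_n,W_n)$ in $\mathcal{F}_{(n-1,1)}$ to consist of supports.

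First I would set $w_0 = \tfrac{1}{\sqrt n}(1,1,\ldots,1)$, $W_n = \mathrm{span}(w_0)$, $V_n = w_0^\perp$, and, writing $\omega = e^{2\pi i/n}$, define $u_0 = \tfrac{1}{\sqrt n}(1,\omega,\omega^2,\ldots,\omega^{n-1})$. Since $\sum_{k=0}^{n-1}\omega^k = 0$, one has $u_0\perp w_0$, i.e. $u_0\in V_n$, $\|u_0\| = 1$, and $m(u_0) = (\tfrac 1n,\ldots,\tfrac 1n) = m(w_0) =: p_0$. Taking single-column matrices $\underline v = u_0$ and $\underline w = w_0$ gives $\Phi(\underline v\,\underline v^*) = \Phi(\underline w\,\underline w^*)$, so $(V_n,W_n)\in\mathcal{S}_{(n-1,1)}$ by Theorem \ref{teo: equival def soporte}(3). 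Note that $\Phi(\sigma_{W_n}) = \{p_0\}$ because $\dim W_n = 1$.

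The crucial step is proving $p_0\in\mathrm{int}_{\Delta^{n-1}}\Phi(\sigma_{V_n})$. By Theorem \ref{teo: igualdad diagonal de tr 1 posit en V es igual a la caps conv esfera de V}, $\Phi(\sigma_{V_n}) = \mathrm{co}\,m(\Sigma_{V_n})$, so it is enough to show that the differential at $u_0$ of $m:\Sigma_{V_n}\to\Delta^{n-1}$ maps onto the tangent space $\{\delta\in\R^n:\sum\delta_k = 0\}$ at $p_0$. A tangent variation is $X\in V_n$ with $\mathrm{Re}\langle X,u_0\rangle = 0$; writing $X_k\overline{u_{0,k}} = (a_k + ib_k)/\sqrt n$ with $a_k,b_k\in\R$ one gets $(dm_{u_0}(X))_k = 2\mathrm{Re}(X_k\overline{u_{0,k}}) = \tfrac{2}{\sqrt n}\, a_k$, while the condition $X\in V_n$ becomes $\sum_k\omega^k(a_k + ib_k) = 0$ and the tangency condition becomes $\sum_k a_k = 0$. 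Given $\delta$ with $\sum\delta_k = 0$, setting $a_k = \tfrac{\sqrt n}{2}\delta_k$ reduces matters to solving $\sum_k\omega^k b_k = i\sum_k\omega^k a_k$ for real $b_k$, which is possible because the $\R$-span of $\{1,\omega,\ldots,\omega^{n-1}\}$ is all of $\C$ whenever $n\geq 3$ (it reduces to $\R$ when $n = 2$, which is exactly the case excluded by the hypothesis). Hence $dm_{u_0}$ is surjective, so $p_0$ lies in the relative interior of $\Phi(\sigma_{V_n})$.

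To conclude I identify $\mathcal{F}_{(n-1,1)}$ with $\mathrm{Gr}(n-1,\C^n)$ via $V\mapsto(V,V^\perp)$. Both $V\mapsto\Phi(\sigma_V)$ (Hausdorff-continuous on compact convex subsets of $\Delta^{n-1}$, via $\sigma_V = P_V\sigma\, P_V$ for $\sigma$ the state space) and $V\mapsto m(w_V)$ (where $w_V$ is a locally continuous unit generator of $V^\perp$, with the moment independent of phase) are continuous near $V_n$. Since $p_0$ is in the interior of $\Phi(\sigma_{V_n})$, continuity gives $m(w_V)\in\Phi(\sigma_V)$ for every $V$ in a neighbourhood of $V_n$, and then Theorem \ref{teo: equival def soporte}(4) shows $(V,V^\perp)\in\mathcal{S}_{(n-1,1)}$. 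Thus $(V_n,W_n)$ is an interior point. The main obstacle is the surjectivity of $dm_{u_0}$; the computation above reduces it to the linear-algebra fact that the $n$-th roots of unity $\R$-span $\C$ precisely when $n\geq 3$, which is exactly the hypothesis of the theorem.
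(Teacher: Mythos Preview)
Your proof is correct and takes a genuinely different route from the paper's. The paper proceeds computationally: it exhibits, in the appendices, explicit numerical supports in $\C^3$, $\C^4$, $\C^5$ lying in $\mathcal{F}_{(2,1)}$, $\mathcal{F}_{(2,1)}$, $\mathcal{F}_{(3,1)}$ respectively, each coming with $n$ vectors $v^i\in V$ such that the ``moment matrix'' $M_n=(v^i\circ\overline{v^i})$ has nonzero determinant and the linear system $M_nX=w\circ\overline{w}$ has a strictly positive solution; both properties are open, so they persist under perturbation of $(V,W)$. General $n\ge3$ is handled by writing $n=3h+4k+5l$ and assembling block-diagonal supports from the three building blocks, yielding a support with $r=2h+2k+3l$ and $s=1$.

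Your argument is uniform and conceptual: a single construction ($W_n$ spanned by the all-ones vector, $V_n=W_n^\perp$, witness $u_0$ the discrete Fourier vector) works for every $n\ge3$, and the interior property is obtained not by checking a determinant but by showing that the moment map $m:\Sigma_{V_n}\to\Delta^{n-1}$ is a submersion at $u_0$, so that its image already contains a relative neighbourhood of $p_0$. The place where the hypothesis $n\ge3$ enters---that the $n$-th roots of unity $\R$-span $\C$---is exactly the obstruction for $n=2$. This is shorter and avoids the numerical appendices entirely. What the paper's approach buys, on the other hand, is a smaller $r$ (roughly $2n/3$ rather than $n-1$) and a more explicit, linear-algebraic certificate (the invertibility of $M_n$) for the open condition; your approach trades that for a cleaner differential-geometric argument and the identification $\mathcal{F}_{(n-1,1)}\cong\mathrm{Gr}(n-1,\C^n)$.
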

\begin{proof}
	We will use the examples described in the appendices in the cases $n=3$, $n=4$ and $n=5$ where some cases of supports 
	%in $\C^3$, $\C^4$ and $\C^5$ 
	that are interior points of the flags $\mathcal{F}_{(2,1)}$, $\mathcal{F}_{(2,1)}$ and 
	$\mathcal{F}_{(3,1)}$ are shown.

	Consider now %with $(V_3, W_3)$, $(V_4, W_4)$ and $(V_5, W_5)$ 
	the supports $(V,W)$ of $\C^3$, $\C^4$ and $\C^5$ described in appendices 
	\ref{apendice A: ej 3x3},  \ref{apendice B: ej 4x4} and \ref{apendice C: ej 5x5} respectively. We will also denote with $V_3$, $V_4$ and $V_5$ the 
	matrices whose columns are defined with the generators of the corresponding 
	subspaces described in each case in the mentioned appendices.
	$M_3=V_3\circ \overline{V_3}$, $M_4=V_4\circ \overline{V_4}$ and $M_5=V_5\circ 
	\overline{V_5}$ are also the 
	matrices defined there.
	Similarly $W_3$, $W_4$ and $W_5$ will 
	denote the matrices whose unique column is the generator of the corresponding 
	subspace $W$. In each case these supports are interior points of the 
	corresponding flag manifolds.
	
	Observe that for any $n\in\N$, $n\geq 3$, there exist $h,k,l\in\N$ such that 
	$n=3h+4k+5l$. Let us now fix a triple of those $h,k$ and $l$ and consider the  
	subspaces $V$ and $W$ defined as follows. $V$ is generated by the columns of 
	the following $n\times n$ block matrix $V_n$ formed with $h$ copies of $V_3$, 
	$k$ of $V_4$ and $l$ of $V_5$ in the diagonal 
	$$
	V_n=
	\begin{array}{c}
	\begin{array}{ccc}
	\overbrace{
		\begin{array}{c}	\hspace{.7cm} \end{array}}^{3h} 
	&
	\overbrace{
		\begin{array}{c}	\hspace{.7cm} \end{array}}^{4k} 
	&
	\overbrace{	\begin{array}{c}	\hspace{.7cm} \end{array}}^{5l} 
	\end{array}
	\\ [-2ex] % esto mueve separacion o ancho entre filas del array
	\begin{pmatrix}
	\begin{array}{ccc}
	\oplus_{i=1}^h V_3& 0 &0 \\
	0&	\oplus_{i=1}^k V_4 &0 \\
	0& 0 &\oplus_{i=1}^l V_5
	\end{array}   
	\end{pmatrix}
	\end{array}
	\ \text{ where } 
	\oplus_{i=1}^m V_j=
	\begin{array}{c}
	\begin{array}{c}
	\overbrace{
		\begin{array}{c}	\hspace{2cm} \end{array}}^{j\times m} 
	\end{array}
	\\ [-2ex] % esto mueve separacion o ancho entre filas del array
	\begin{pmatrix} 
	V_j   & 0   &\dots  & 0 \\
	0&	\ddots &\ddots    & \vdots\\
	\vdots&	\ddots &\ddots & 0 \\
	0 & \dots    &0      &V_j\end{pmatrix}
	\end{array}
	$$
	
	and $W_n$ is the $n\times 1$ matrix formed with $h$ 
	copies of $W_3$, $k$ of $W_4$ and $l$ of $W_5$ concatenated (where $W_i$, $i=1,2,3$ are the subspaces used in the appendices \ref{apendice A: ej 3x3}, \ref{apendice B: ej 4x4} and \ref{apendice C: ej 5x5}). The transpose of 
	$W_n$ is:
	$$   
	\begin{array}{lcl}
	{}&
	\begin{array}{ccc}
	\overbrace{
		\begin{array}{c}	\hspace{2cm} \end{array}}^{3h} 
	&
	\overbrace{
		\begin{array}{c}	\hspace{2cm} \end{array}}^{4k} 
	&
	\overbrace{	\begin{array}{c}	\hspace{2cm} \end{array}}^{5l} 
	\end{array}
	&{}
	\\[-2ex] % esto mueve separacion o ancho entre filas del array
	(W_n)^t=&    
	\begin{pmatrix}  
	W_3& \dots & W_3
	&
	W_4& \dots & W_4
	&
	W_5& \dots & W_5 
	\end{pmatrix}
	&
	\in \C^{1\times n}
	\end{array}
	$$
	%  
	%  $$   
	%  W_n=
	%  \begin{array}{l}
	%  \begin{array}{lc}
	%  \overbrace{
	%  	\begin{array}{c}	\hspace{1cm} 
	%  	\end{array}}^{1}
	%  & {}\\ 
	%  \end{array}
	%  \\
	%  \begin{pmatrix} 
	%  \begin{array}{c}
	%  W_3\\ \vdots \\ W_3
	%  \\
	%  W_4\\ \vdots \\ W_4
	%  \\
	%  W_5\\ \vdots \\ W_5
	%  \end{array} 
	%  \end{pmatrix}
	%  \begin{array}{l}
	%  \left.
	%  \begin{array}{c}
	%  \vspace{1.1cm}    \end{array}
	%  \right\}3h
	%  \\
	%  \left.
	%  \begin{array}{c}
	%  \vspace{1.1cm}    \end{array}\right\}4k
	%  \\
	%  \left.
	%  \begin{array}{c}
	%  \vspace{1.1cm}    \end{array}\right\}5l
	%  \end{array}
	%  \end{array}
	%  $$
	Now consider the subspace $V\subset \C^n$, generated by the columns of $V_n$ 
	and $W\in\C^n$ generated by $W_n$. Then it can be verified that $V$ is 
	orthogonal to $W$ and (since $\dim(V_3)=\dim(V_4)=2$ and $\dim(V_5)=3$) that 
	$\dim(V)=2h+2k+3l$ and $\dim(W)=1$.
	Moreover, considering $M_n=V_n\circ \overline{V_n}\in M_n(\R_+)$ it is easy to 
	see 
	that $\det(M_n)=\det(M_3)^h \det(M_4)^k \det(M_5)^l\neq 0$ 
	because every factor is non-zero (see  \ref{apendice A: ej 3x3}, \ref{apendice B: ej 4x4} and  \ref{apendice C: ej 5x5}). Then, the linear system
	$$
	M_n X=W_n\circ \overline{W_n}
	$$
	has a unique solution $X\in \R^{n\times 1}$.
	% that can be found  solving each of the smaller block systems generated (see 
	%\eqref{eq: ecuaciones ejemplo 3x3}, \eqref{eq: ecuaciones ejemplo 4x4} and 
	%\eqref{eq: ecuaciones ejemplo 5x5}).
	The concrete solution $X$ can be found considering 
	the examples of the Appendix, and satisfies $X_{i,1}>0$ for all $i=1,\dots, 
	n$.   
	Thus the pair $(V,W)\in \mathcal{F}_{(2h+2k+3l,1)}\subset\C^n=\C^{3h+4k+5l}$ 
	and is a support as in Definition \ref{def: soporte} (consider the vectors 
	$v^i=\sqrt{X_{i,1}}v_i$ in the conditions \eqref{eq: condicion soporte}, for 
	$i=1,\dots,n$, and $v_i$ the $i^\text{th}$ column of 
	$V_n$). 
	
	Now consider small perturbations $V'$ and $W'$ of the subspaces $V$ and 
	$W$ such that the dimensions of the perturbed subspaces are conserved  
	and $V'\perp W'$ holds. That is, the pair $(V', W') \in 
	\mathcal{F}{(2h+2k+3l,1)}$ and is near $(V,W)$. 
	Then, we can choose $n$ vectors of $V'$ close to the ones in the columns of 
	$V_n$  such that they generate $V'$. Similarly for $W'$. 
	Let us denote with $V_n'$ and $W_n'$ the matrices such that its columns are 
	the respective generators mentioned.
	Moreover, $V'$ and $W'$ can be chosen in a neighborhood of $V$ and $W$ in such 
	a way that the pair of matrices $V_n'$ and $W_n'$ satisfy that 
	\begin{enumerate}
		\item $\det(V_n'\circ \overline{V_n'})\neq 0$,
		\item the unique solution $X\in\C^{n\times 1}$ of $ (V_n'\circ 
		\overline{V_n'}) X=W_n'\circ 
		\overline{W_n'}$ satisfies $X_{i,1}>0$.
	\end{enumerate} 
	This implies that the pair $(V',W')$ is a support according to Definition \ref{def: soporte}.
	Then $(V,W)$ is an interior point of $\mathcal{F}_{(2h+2k+3l,1)}$.
\end{proof}

\begin{remark}
	Observe that in the decomposition used in the previous proof given by $n=3h+4k+5l$, with $h,k,l\in\N$, the term $5l$ is only needed for $n=5$. Every $n\in \N\setminus\{1,2,5\}$ can be written as $n=3h+4k$.
	%	for every $n\in\N_{\geq 3}$ given by $n=3h+4k+5l$, with $h,k,l\in\N$, that was used in the previous proof
\end{remark} 

\begin{remark}
	Note that at the end of the proof of the previous theorem if the subspaces 
	$V'$ and $W'$ are not required to be orthogonal they still satisfy the 
	conditions (1) and (2) if they are close enough to $V$ and $W$. 
\end{remark}

 \begin{appendix}
 %\section{Appendix}
 	Here we present examples of supports in low dimensions that are interior points of flag manifolds.
\section{Example of a support  in ${\C}^3$ that is an interior point of 
	$\mathcal{F}_{(2,1)}$}\label{apendice A: ej 3x3}	
Let us consider the dimension 2 subspace $V\subset\C^3$ generated by the 
following norm one vectors:
%$
%v^1=\left(\frac{1886514-7511450
%	i}{\sqrt{157449642458577}},
%0,-\frac{4236005-8917684
%	i}{\sqrt{157449642458577}}\right),$ 
%$  
%v^2=\left(-\frac{6034458+5957865	i}{\sqrt{175782050184862}},
%\frac{10006368+1934893	i}{\sqrt{175782050184862}},
%0\right) , $ 
%and
%$v^3=\left(-\frac{ {30683}
%	+ {33081	i} }{{28}\sqrt{4664715}},\left(\frac{1537}{4}+\frac{479
%	i}{4}\right)
%\sqrt{\frac{3}{1554905}},\left(\frac{61}{7}+\frac{157
%	i}{14}\right)
%\sqrt{\frac{55}{84813}}\right)$,
$$
\left\{
\begin{array}{rcl}
v^1&=&\left(\frac{1886514-7511450
	i}{\sqrt{157449642458577}},
0,-\frac{4236005-8917684
	i}{\sqrt{157449642458577}}\right),\\
v^2&=&\left(-\frac{6034458+5957865	i}{\sqrt{175782050184862}},
\frac{10006368+1934893	i}{\sqrt{175782050184862}},
0\right),\\
v^3&=&\left(-\frac{ {30683}
	+ {33081	i} }{{28}\sqrt{4664715}},\left(\frac{1537}{4}+\frac{479
	i}{4}\right)
\sqrt{\frac{3}{1554905}},\left(\frac{61}{7}+\frac{157
	i}{14}\right)
\sqrt{\frac{55}{84813}}\right),
\end{array}
\right.
$$
%$ 
%v^3=\left(-\frac{ {30683}
%	+ {33081	i} }{{28}\sqrt{4664715}},\left(\frac{1537}{4}+\frac{479
%	i}{4}\right)
%\sqrt{\frac{3}{1554905}},\left(\frac{61}{7}+\frac{157
%	i}{14}\right)
%\sqrt{\frac{55}{84813}}\right),
%$ 
and the subspace $W$ generated by $ w=
\left(\frac{5- i}{2\sqrt{15}},\left(\frac{1}{2}-\frac{i}{2}\right)
\sqrt{\frac{3}{5}},\frac{2}{\sqrt{15}}\right)
$, that is orthogonal to $V$.

Then, if  
$a_1= \frac{115667}{303810}$, $a_2=\frac{85199}{222794}$ and $a_3=\frac{395794}{1670955}$
 direct computations show that $ \sum_{i=1}^3 a_i=1$ and that if 
$V_3=  
( v^1,  v^2 , v^3)$
 and 
 $M_3=V_3\circ \overline{V_3}=  
(v^1\circ \overline{v^1} , v^2\circ \overline{v^2} , v^3\circ 
 \overline{v^3})$ are the $3\times 3$ matrices with columns $v^i$ and  
 respectively $v^i\circ 
 \overline{v^i}$, for $i=1,2,3$, 
 then
 \begin{equation}\label{eq: ecuaciones ejemplo 3x3}
M_3\cdot a^t
% \begin{pmatrix}
% a_1\\ a_2\\ a_3 \end{pmatrix}
 =\sum_{i=1}^3 a_i 
 \ \left(v^i\circ \overline{v^i}\right)=w\circ \overline{w}.
 \end{equation}
 where $a=(a_1,a_2,a_3)$, and $a^t$ is its transpose (a column matrix).  This proves that the pair $(V,W)$ is a support (consider the vectors 
 $v^i=\sqrt{a_i}\, v^i$, for $i=1,2,3$ and the 
 definition \eqref{eq: condicion soporte}). Moreover, it can be checked that the matrix $M_3$ (the one involved 
 in the equation \eqref{eq: ecuaciones ejemplo 3x3}) has non-zero 
 determinant (which proves that the numbers $a_i$, $i=1,2,3$ are unique).

 Then, it can be proved that the support $(V,W)$ is interior in the set of 
 $(2,1)$ flags $\mathcal{F}_{(2,1)}$ of $\C^3$. This follows because 
 continuous and small perturbations $w'$ and $(v')^i$ of the vectors $w$ and
 $v^i$ (with the condition $w'\perp (v')^i $, for $i=1,2,3$), produce a non-zero 
 determinant of the perturbed corresponding matrix $M_3'$ with columns $(v')^i \circ 
 \overline{(v')^i }$ for 
 $i=1,2,3$. 
 Then there are unique solutions $a_i'>0$ of the corresponding equation 
 \eqref{eq: ecuaciones ejemplo 3x3} for the 
 new vectors $(v')^i $ and $w'$. This proves that there exists a neighborhood of 
 $(V,W)$ in 
 $\mathcal{F}_{(2,1)}$ such that every pair $(V',W')$ belonging to it is a 
 support according to Definition \ref{def: soporte}.

\section{Example of a support  in $\C^4$ that is an interior point of 
	$\mathcal{F}_{(2,1)}$}\label{apendice B: ej 4x4}
Let $V\subset \C^4$ be the subspace of dimension 2 generated by the following 
norm one vectors:
$$
\left\{
\begin{array}{rcl}
v^1&=& \left(-\frac{\frac{698}{3}+75 	
i}{\sqrt{212114}},\frac{\frac{1036}{3}+51
	i}{\sqrt{212114}},\left(\frac{77}{3}-\frac{218
	i}{3}\right)
\sqrt{\frac{2}{106057}},\left(-\frac{113}{3}+\frac{49
	i}{3}\right)
\sqrt{\frac{2}{106057}}\right), \\
v^2&=&
\left(-\frac{530-\frac{655
		i}{2}}{\sqrt{1918749}},\frac{760-\frac{173
		i}{2}}{\sqrt{1918749}},\frac{\frac{219}{2}-782
	i}{\sqrt{1918749}},\frac{\frac{263}{2}+552
	i}{\sqrt{1918749}}\right), \\
v^3&=& \left(-\frac{75+\frac{45
		i}{4}}{\sqrt{29729}},\frac{
	54-\frac{365
		i}{4}}{\sqrt{29729}},-\frac
{18-\frac{243
		i}{4}}{\sqrt{29729}},-\frac
{\frac{169}{2}+\frac{159
		i}{4}}{\sqrt{29729}}\right)\\
	v^4&=&	\left(-\frac{\frac{1345}{2}+283
i}{\sqrt{1909509}},\frac{\frac{563}{2}+239
i}{\sqrt{1909509}},\frac{738+\frac{263
i}{2}}{\sqrt{1909509}},-\frac{782-\frac{519
i}{2}}{\sqrt{1909509}}\right)

	\end{array}
	\right.
$$
and $W$ the subspace generated by 
$w=
\left(\frac{\frac{1}{2}-\frac
	{i}{2}}{\sqrt{2}},\frac{\frac{1}{2}-\frac{i}{2}}{\sqrt
	{2}},\frac{\frac{1}{2}+\frac{i}{2}}{\sqrt{2}},\frac{\frac{1}{2}+\frac{i}{2}}
	 {\sqrt{2}}\right)
$.

If $a_1=\frac{20559837596768881}{124590980225106843}$, $a_2=
\frac{96813856451303497}{415303267417022810}$, $a_3=\frac{
1154873210442508}{8612279739062685}$ and  $a_4=\frac{
49954131355895969}{106792268764377294}$, it can be verified that if 
$V_4=	(v^1 , v^2  , v^3 , v^4 )$,
$M_4=	V_4\circ\overline{V_4}=(v^1\circ v^1 , v^2\circ v^2  , v^3\circ v^3 ,v^4\circ v^4 )$ and $a=(a_1,a_2,a_3,a_4)$, then 
	\begin{equation}\label{eq: ecuaciones ejemplo 4x4}
M_4\cdot a^t
	=\sum_{i=1}^4 a_i 
	\ (v^i\circ \overline{v^i})=w\circ \overline{w}.
	\end{equation}
The determinant of the matrix $M_4$ is non-zero and therefore similar 
considerations as those made in the previous example in \ref{apendice A: ej 3x3} 
can be used in order to prove that $(V,W)$ is a support of $\C^4$ that is included in an 
open subset of the flags $\mathcal{F}_{(2,1)}$.

\section{Example of a support  in $\C^5$ that is an interior point of 
	$\mathcal{F}_{(3,1)}$}\label{apendice C: ej 5x5}
Let $V\subset \C^5$ be the subspace of dimension 3 generated by the rows 
of the matrix 
$M_V =
\left(
\begin{array}{ccccc}
-\frac{19}{50}-\frac{i}{50} &
-\frac{2}{25}+\frac{19
	i}{50} &
-\frac{7}{25}+\frac{3
	i}{25} &
\frac{8}{25}+\frac{3 i}{25}
& \frac{8}{25}+\frac{11
	i}{50} \\
-\frac{1}{5}+\frac{11 i}{50}
& \frac{1}{10}+\frac{3
	i}{25} &
\frac{19}{50}-\frac{i}{5} &
\frac{19}{50}+\frac{3
	i}{10} & -\frac{21}{50} \\
\frac{29}{50} &
-\frac{1}{50}+\frac{3
	i}{10} &
-\frac{1}{10}-\frac{8
	i}{25} &
\frac{1}{5}+\frac{9 i}{50}
& \frac{1}{5}-\frac{21
	i}{50} \\
\end{array}
\right) 
$
and $W$ the subspace generated by 
$w=\left(\frac{1-i}{\sqrt{10}},\frac{1-i}{\sqrt{10}},\frac{
	1+i}{\sqrt{10}},\frac{1+i}{	\sqrt{10}},\frac{1+i}{\sqrt
	{10}}\right).
$

Now define the coefficient matrix 
$C=
\begin{pmatrix} 
-\frac{16}{5}+\frac{53 i}{10}
& -\frac{4}{5}-\frac{5
	i}{2} &
\frac{8}{5}+\frac{29 i}{10}
\\
1+\frac{i}{2} & -\frac{3}{5}
& \frac{2}{5}+\frac{13
	i}{10} \\
-\frac{13 i}{5} &
-\frac{11}{5}+\frac{29
	i}{10} & \frac{4}{5}+i \\
1+\frac{3 i}{5} &
\frac{3}{5}+\frac{7 i}{5} &
\frac{1}{2}+\frac{7 i}{10}
\\
\frac{13}{10}+\frac{3 i}{2} &
-\frac{2}{5}+\frac{9 i}{5}
& -\frac{14}{5}-\frac{3
	i}{10} \end{pmatrix}
$, 
and consider the 5 vectors belonging to $V$ obtained from the rows of the 
product 
$C\cdot M_V\in 
\C^{5\times 5}$. Let us denote those vectors (rows) with $v^1$, $v^2$, $v^3$, 
$v^4$ and $v^5$.
Then it can be checked that for
$a_1=\frac{38245034600180718292066117}{93505493283505350729949090}$, 
$a_2=\frac{876893808432404350620802}{9350549328350535072994909}$, 
$a_3=\frac{11840789324853298629489761}{93505493283505350729949090}$, 
$a_4=\frac{11483749488079211997737796}{46752746641752675364974545}$ and  
$a_5=\frac{1168323229798886630670960}{9350549328350535072994909}$ the equality 
$\sum_{i=1}^5 a_i=1$ holds and if 
$V_5=	(v^1 , v^2  , v^3 , v^4 ,v^5)$, $M_5=V_5\circ \overline{V_5}=(v^1\circ v^1 , v^2\circ v^2  , v^3\circ v^3 ,v^4\circ v^4, v^5\circ v^5  )$ and $a=(a_1,a_2,a_3,a_4,a_5)$, then 
 \begin{equation}\label{eq: ecuaciones ejemplo 5x5}
M_5\cdot a^t
=\sum_{i=1}^5 a_i 
\ (v_i\circ \overline{v_i})=w\circ \overline{w}.
\end{equation}
The determinant of the matrix $M_5$ involved in equation \eqref{eq: ecuaciones 
ejemplo 5x5} is non-zero and therefore similar considerations as those made in 
the previous examples of the Appendix can be made in order to prove that 
$(V,W)$ is a support 
that is included in an open subset of the flags $\mathcal{F}_{(3,1)}$ of $\C^5$.
\begin{remark}
	Note that the steps used to prove that the previous example $(V_5,W_5)$ is an 
	interior point of $\mathcal{F}_{(3,1)}$ in 
	$\C^5$ cannot be followed if the dimensions of the subspaces were 
	$\dim(V)=2$ and $\dim(W)=1$ as in  
	\ref{apendice A: ej 3x3} and \ref{apendice B: ej 4x4}. This is because if 
	$\dim(V)=2$ then $\rank{M_5}=\rank(V_5\circ \overline{V_5})\leq \rank(V_5) 
	\rank(\overline{V_5})=4$, and therefore $\det(M_5)=0$ in this case (for any 
	choice of $V_5$).
	This is not enough to asseverate that there is not a support 
	 in $\C^5$ that is an interior point of $\mathcal{F}_{(2,1)}$, but we have not found an example with these dimensions.
\end{remark}

\end{appendix}

\bibliographystyle{abbrv}

\end{document}